\documentclass[10pt]{article}
\usepackage[utf8]{inputenc}
\usepackage{latexsym,amssymb,amsmath,url,authblk}

\def\N{\mathbb{N}}

\def\R{\mathbb{R}}

\def\proof{\par\noindent{\em Proof. }}
\def\eproof{\hfill{$\Box$}\bigskip}

\def\ds{\dots}
\def\sus{\subset}
\def\al{\alpha}
\def\be{\beta}
\def\ga{\gamma}
\def\de{\delta}

\def\cc{\colon}
\def\ep{\varepsilon}

\newtheorem{thm}{Theorem}[section]
\newtheorem{prop}[thm]{Proposition}

\newtheorem{lem}[thm]{Lemma}

\title{Thomassen's proof and Filippov's proof of the Weak Jordan Theorem}
\author{M. Klazar\footnote{{\tt klazar@kam.mff.cuni.cz}}\ \ 
(KAM MFF UK, Praha)}
\date{\today}

\begin{document}

\maketitle

\begin{abstract}
We present, in detail and with a~modern rigor, the two title proofs. The Weak Jordan 
Theorem (WJT) states that the complement of any topological circuit in the plane is disconnected.
\end{abstract}

\tableofcontents

\section{JT, WJT and AT. Auxiliary results}\label{sec_intro}

{\bf JT, WJT and AT. }Let us begin with a~few necessary definitions. 
We denote by $I$ any compact real interval $I=[a,b]$ with $a<b$. If 
$f\cc I\to X$ is a~map, we set $f[I]=\{f(x)\cc\; x\in I\}$, $f^0=
\{f(x)\cc\; a<x<b\}$ and 
$\mathrm{ep}(f)=\{f(a),f(b)\}$. 
We also say that {\em $f$ joins $f(a)$ to $f(b)$ in $X$}. If $f(a)=f(b)$, we call $f$ {\em closed}. If $g\cc A\to B$ is any map and $C$ is {\em any} set, we define 
$$
g[C]=\{g(x)\cc\;x\in A\cap C\}\,\text{ and }
g^{-1}[C]=\{x\in A\cc\;g(x)\in C\}\,.
$$

An {\em arc} is a~continuous injection $f\cc I\to\R^2$. A~{\em circuit} is 
a~continuous map $f\cc I=[a,b]
\to\R^2$ such that $f(x)=f(y)$ for $x\ne y$ if and 
only if $\{x,y\}=\{a,b\}$. The following {\em Jordan Theorem}, or JT, 
is a~fundamental result in plane topology. It is due to C.~Jordan \cite[pp. 587--594]{jord} in 1887.

\begin{thm}[Jordan, 1887]\label{thm_jordan}
For any circuit $f\cc I\to\R^2$, the complement
$$
\R^2\setminus f[I]=A\cup B
$$
is a~disjoint union of two nonempty open connected sets $A$ and $B$ such that $A$ is bounded, $B$ is unbounded, and $\partial A=\partial B=f[I]$.
\end{thm}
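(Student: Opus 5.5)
We would prove Theorem~\ref{thm_jordan} in four stages, with the Weak Jordan Theorem as the central and hardest ingredient. Write $K=f[I]$, a compact set homeomorphic to a circle.

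\textbf{Stage 1: components and boundaries.} Since $K$ is compact, $\R^2\setminus K$ is open. Hence its components are open and path-connected, because the plane is locally path-connected. Moreover, $K$ lies in some disc $D$, and $\R^2\setminus D$ is connected. So exactly one component is unbounded and all others are bounded. Each component $U$ satisfies $\partial U\subset K$, since a boundary point of $U$ lying outside $K$ would belong to some component adjacent to $U$, which is impossible.

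\textbf{Stage 2: arcs do not separate.} We show that for every arc $g$, the complement $\R^2\setminus g[I]$ is connected. The standard argument is by contradiction and bisection. Suppose points $p,q$ are separated by $g[I]$. Split $I$ into two halves. If both half-arcs failed to separate $p$ from $q$, we would obtain two paths from $p$ to $q$, one avoiding each half. Their concatenation is a closed path avoiding the single point at which the halves meet. A winding-number or homotopy argument in $\R^2$ minus that point then yields a contradiction. Concretely, this uses the fact that the two halves are disjoint compacta except at a common endpoint, together with the Alexander-type lemma for $\mathbb{S}^2$ or winding numbers. Iterating the bisection, we obtain nested intervals whose arcs all separate $p$ from $q$. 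These shrink to a point, and that point cannot separate, since a path from $p$ to $q$ avoiding a small neighbourhood exists. This is the contradiction.

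\textbf{Stage 3: the Weak Jordan Theorem.} We show that $\R^2\setminus K$ is disconnected, i.e.\ that there are at least two components. This is the main obstacle. We would follow Thomassen's route: reduce to the nonplanarity of $K_{3,3}$.
\begin{enumerate}
\item Choose a horizontal line meeting $K$, and take the leftmost and rightmost points of $K$. These split $K$ into two arcs $P_1,P_2$.
\item Take a vertical segment between them. Using compactness, extract a subsegment $\ell$ joining $P_1$ to $P_2$ whose interior avoids $K$. If $\R^2\setminus K$ were connected, a path from a point of $\ell^0$ to the unbounded region could be combined with $\ell$ and suitable polygonal connections.
\item This produces a $K_{3,3}$-like configuration of arcs with disjoint interiors.
\item The nonplanarity of $K_{3,3}$ for polygonal drawings is proved combinatorially via Euler's formula, or via the polygonal Jordan theorem, which is proved directly by ray parity. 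Approximating arcs by polygonal ones, using uniform continuity and the positive distances between disjoint compacta, transfers this to the topological configuration and gives the contradiction.
\end{enumerate}

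\textbf{Stage 4: exactly two components, and the boundaries.} Let $U$ be any component, and suppose some $x\in K$ is not in $\partial U$. Then a small open subarc of $K$ around $x$ misses $\overline U$, so $\partial U$ is contained in an arc $J\subset K$. By Stage~2, $\R^2\setminus J$ is connected. But $U$ and $\R^2\setminus\overline U$ would then disconnect it, since the latter is nonempty and contains the other components by Stage~3. This is a contradiction, so $\partial U=K$ for every component $U$.

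Finally, suppose there are three components $U_1,U_2,U_3$. Fix disjoint small arcs $J_1,J_2\subset K$. Every point of $J_1^0$ and $J_2^0$ is accessible from each $U_i$, since each $U_i$ has full boundary $K$. Choosing interior points in each $U_i$ and connecting them to points of $J_1^0$ and $J_2^0$, together with the two arcs of $K$, would again yield a planar $K_{3,3}$, contradicting Stage~3's lemma. Accessibility is obtained via an $\ep$-chain argument. Hence there are exactly two components $A$ (bounded) and $B$ (unbounded), with $\partial A=\partial B=K$.
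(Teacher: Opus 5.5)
The paper gives no proof of Theorem~\ref{thm_jordan}. It proves only the WJT and remarks that the full statement also needs the Arc Theorem~\ref{thm_AT}. Your Stage~3 is the paper's Thomassen argument, with leftmost/rightmost points and a vertical line instead of highest/lowest points and a horizontal one. Stage~1 and your derivation of $\partial U=K$ in Stage~4 are correct. One correction in Stage~2: the contradiction must be drawn in $S^2\setminus\{d\}$, which is simply connected; in $\R^2\setminus\{d\}$ a loop winding around $d$ contradicts nothing.

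\textbf{The gap: accessibility in the last step of Stage~4.} The inference ``every point of $J_1^0$ and $J_2^0$ is accessible from each $U_i$, since each $U_i$ has full boundary $K$'' is invalid. A boundary point is only a limit of points of $U_i$, not the endpoint of an arc running through $U_i$. For example, interior points of the limit segment of the Warsaw circle lie on the boundary of its bounded complementary domain but are not accessible from it. For Jordan curves every point is in fact accessible, but proving this needs an extra argument, typically uniform local connectedness of $U_i$ at $K$. Your $\varepsilon$-chain argument presupposes exactly this, and nothing in Stages~1--3 supplies it.

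\textbf{The $K_{3,3}$ is also never specified.} Each $q_i$ reaches $K$ at only two points, so the graph is $K$ plus three chords through $U_1,U_2,U_3$. This is a subdivision of $K_{3,3}$ only if the chords pairwise interleave along $K$. If the attachment points on $J_1$ and $J_2$ come in mismatched orders, the graph is planar and there is no contradiction.

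\textbf{How to repair both.} A much weaker accessibility is enough. Let $A\subset K$ be an open subarc and $x\in A$. Since $x\in\partial U_i$, there is $p\in U_i$ with $d_2(p,x)<d_2(x,K\setminus A)$. Then the point $y$ of $K$ on the segment $px$ closest to $p$ lies in $A$, and the segment from $p$ to $y$, minus $y$, lies in $U_i$. Now proceed as follows.
\begin{enumerate}
\item Take six open subarcs $A_1,\dots,A_6$ of $K$, in cyclic order, with pairwise disjoint closures.
\item For $j\in\{i,i+3\}$, pick $y_j\in A_j$ accessible from $U_i$ as above. Take the radii small enough that the two segments used for $U_i$ are disjoint.
\item Join $y_i$ to $y_{i+3}$ by an arc with interior in $U_i$. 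Take a PL arc in $U_i$ between the two starting points (Proposition~\ref{prop_opArcCon}). Cut it at its last meeting with the first segment and at its next meeting with the second.
\end{enumerate}
Then $\{y_1,y_3,y_5\}$ and $\{y_2,y_4,y_6\}$, with the six arcs of $K$ between consecutive $y_j$ and the three chords, form a $K_{3,3}$-configuration. This contradicts Claim~3 of Section~\ref{sec_Thomassen}.
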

We explain the statement. We work in the {\em Euclidean plane}, which is 
a~metric space
$\langle\R^2,d_2\rangle$ with the 
metric
$$
d_2(a,\,b)=\sqrt{(a_x-b_x)^2+(a_y-b_y)^2}\,,
$$
and in its subspaces. We count among them $\R$ and $I$. A~set $X\sus M$ in a~metric space $\langle M,d\rangle$ is {\em disconnected} if there exist open (equivalently, closed) sets $A,B\sus M$ such that
$$
A\cup B\supset X\;\&\;A\cap X\ne\emptyset\ne B\cap X\;\&\;
A\cap B\cap X=\emptyset
$$
---\,we say that {\em $A$ and $B$ cut $X$}. Else, if such sets $A$ and $B$ do not exist, $X$ is {\em connected}. 
We take for granted that every real interval (not necessarily compact) is connected. A~set 
$X\sus M$ is {\em bounded} if $X\sus B=B(a,r)$ for a~ball $B$ in $\langle 
M,d\rangle$. Else, if $X$ is not contained in any ball, 
$X$ is {\em unbounded}. Finally, the {\em boundary of $X$} 
is the set
$$
\partial X=\{a\in M\cc\;\forall r\, B(a,r)\cap X\ne\emptyset\ne B(a,r)\cap (M\setminus X)\}\,.
$$

In this article, we will not prove the JT, but we present two proofs for
the weaker version, the {\em Weak Jordan Theorem}, or WJT.

\begin{thm}[WJT]\label{thmWJT}
For any circuit $f\cc I\to\R^2$, the complement
$$
\R^2\setminus f[I]
$$    
is disconnected.
\end{thm}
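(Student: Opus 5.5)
One workable route is Thomassen's approach, which reduces the WJT to the nonplanarity of $K_{3,3}$. Set $C=f[I]$; it is compact and homeomorphic to a circle. Suppose, for a contradiction, that $\R^2\setminus C$ is connected. The plan is to use this to draw $K_{3,3}$ in the plane with arcs that meet only at shared endpoints. That is impossible, which gives the contradiction.

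\textbf{Step 1: connectivity gives polygonal paths.} $\R^2\setminus C$ is open. An open connected set in the plane is polygonally connected: for a fixed point $p$, the set of points joinable to $p$ by a polygonal line inside the set is open, and so is its complement in the set. Hence any two points off $C$ can be joined by a polygonal arc that avoids $C$.

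\textbf{Step 2: reach the curve from both sides.} Choose two points $p=f(s)$ and $q=f(t)$ on $C$. They split $C$ into two arcs $C_1$ and $C_2$. Take a point $u$ in the unbounded region, for instance far from the bounded set $C$. Then reach $C$ at points of $C_1$ and of $C_2$ by segments. For this I pick a point $x$ of $C_1$ and move toward it from a nearby point off $C$, stopping at the first point where the segment hits $C$. I need a controllable choice here. The standard trick uses a vertical line through an interior point of $C_1$. On that line it finds the nearest points of $C$ from above and from below, taken relative to a point chosen so that these first hits lie on $C_1$ and $C_2$ respectively.

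\textbf{Step 3: build $K_{3,3}$.} Take three points $a_1,a_2,a_3$ on $C$ and three points $b_1,b_2,b_3$ on $C$, alternating around the circuit. The six sub-arcs of $C$ supply six edges of $K_{3,3}$. The remaining three edges are $a_1b_2$, $a_2b_3$ and $a_3b_1$ (chords of the hexagon, up to labelling). One chord is drawn inside a thin region. The other two are drawn as polygonal arcs through $\R^2\setminus C$, using Step 1, and then joined to $C$ by final segments whose only point on $C$ is the endpoint, as in Step 2. These polygonal arcs can be perturbed so they are disjoint, but only if $\R^2\setminus C$ remains connected after one arc is removed.

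\textbf{Step 4: the contradiction.} I would first prove that $K_{3,3}$ has no plane drawing by polygonal arcs. The argument is Euler's formula, or a direct parity and crossing argument for polygonal curves, where the polygonal Jordan theorem is elementary. I would then pass from this to arbitrary arcs. That needs a lemma: an arc does not separate the plane. The lemma is proved by the same kind of argument, and it is exactly what keeps $\R^2\setminus(C\cup\text{arc})$ connected in Step 3.

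The main obstacle is this combination of the ``arcs do not separate'' lemma with nonplanarity of $K_{3,3}$ drawn by general arcs. Thomassen handles it by induction using the polygonal case, and approximates the general arcs near the vertices. I expect most of the technical work to be the careful modifications near the endpoints on $C$. Everything else is routine.
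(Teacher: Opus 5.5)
You follow the paper's strategy (Thomassen's reduction of the WJT to the nonexistence of $K_{3,3}$-configurations, polygonal connectivity of open connected sets, a PL nonplanarity argument, and PL approximation). However, the construction of $K_{3,3}$ in your Step~3 has a genuine gap.

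You put all six vertices on $C$, so you need the three long diagonals of the hexagon as pairwise disjoint arcs through $\R^2\setminus C$. Connectivity of $\R^2\setminus C$ gives you one such arc $A$. A second arc produced the same way may cross $A$, and a crossing cannot be perturbed away, so you would need $\R^2\setminus(C\cup A)$ to be connected. The Arc Theorem does not give this, because it concerns $\R^2\setminus A$ only. Non-separation is not inherited by a union whose pieces meet in a disconnected set, and here $C\cap A$ consists of two points. For example, two arcs with common endpoints individually do not separate the plane, yet their union is a circuit. So the lemma you plan to prove would not close the gap; in effect you would be assuming a separation property at least as strong as the one you are trying to refute. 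There is also a secondary problem. Before the JT is available, you do not know that prescribed points of $C$ are accessible from $\R^2\setminus C$. So you cannot first choose six alternating points and then reach them by ``final segments''.

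The missing idea is Thomassen's configuration in Claim~4, which uses the hypothesis exactly once.
\begin{enumerate}
\item Let $a,b$ be a highest and a lowest point of $C$; they split $C$ into arcs $f_0,f_1$.
\item Draw explicitly a PL arc $f_2$ from $a$ to $b$ with $f_2^0\cap C=\emptyset$: up from $a$, far to the right, down below $b$, left, and up to $b$.
\item On a horizontal line of height in $(b_y,a_y)$, take a segment $T$ with endpoints $c\in f_0^0$, $d\in f_1^0$ and $T^0\cap C=\emptyset$ (Propositions~\ref{prop_interValu} and \ref{prop_onT}).
\item Only now use connectivity. Take a PL arc in $\R^2\setminus C$ from a point of $T^0$ to a point of $f_2^0$. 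Cut it at its last point $e$ on $T$ and at the next point $g$ on $f_2[I]$; the resulting arc has interior disjoint from $C\cup T\cup f_2[I]$.
\end{enumerate}
With $U=\{a,b,e\}$ and $V=\{c,d,g\}$, the nine edges are the four subarcs of $C$, the two pieces of $f_2$, the two pieces of $T$, and the arc from $e$ to $g$. They form a $K_{3,3}$-configuration. Putting two vertices on auxiliary arcs that are already drawn is what makes the single hypothetical edge harmless.

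Likewise, you do not need the Arc Theorem to pass from PL to arbitrary arcs. In Claim~3 the paper takes small disjoint balls around the vertices and covers the nine disjoint truncated arcs by disjoint chains of small balls. It then draws PL arcs inside these connected open sets and reconnects them to the vertices by radial segments. The Arc Theorem is needed only for the full JT.
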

JT $\Rightarrow$ WJT because the disjoint union of two nonempty open 
sets is disconnected.
The proof of the JT is harder than that of the WJT because it requires the following ``lemma'', the {\em Arc Theorem}, or AT. 

\begin{thm}[AT]\label{thm_AT}
For any arc $f\cc I\to\R^2$, the complement
$$
\R^2\setminus f[I]
$$    
is connected.    
\end{thm}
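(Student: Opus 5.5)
We prove the Arc Theorem, that an arc does not separate the plane. The approach is the classical one via Brouwer's fixed point theorem, or equivalently the non-retraction theorem for the disc, together with the Tietze extension theorem. Let $K=f[I]$; it is compact and homeomorphic to $I$. Suppose, for contradiction, that $\R^2\setminus K$ is disconnected. Its unbounded part is connected, since every point far away can be joined to infinity outside a large disc containing $K$. So there is a nonempty open set $U\sus\R^2\setminus K$ which is bounded and is a union of components of $\R^2\setminus K$. By choosing a point $p\in U$ and taking the component of $\R^2\setminus K$ containing $p$, we get such a $U$ with $\partial U\sus K$.

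\textbf{Step 1 (a retraction onto $K$).} Since $K$ is homeomorphic to $I$, which is an absolute retract, the Tietze extension theorem gives a continuous retraction $r\cc\R^2\to K$. Concretely, extend each coordinate of $f^{-1}\cc K\to I$ continuously to $\R^2$, clamp the result into $I$, and compose with $f$.

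\textbf{Step 2 (building a retraction of a disc onto its boundary).} Take a closed disc $D=\overline{B(p,R)}$ with $K\cup U\sus B(p,R)$. Define $g\cc D\to D\setminus\{p\}$ by $g(x)=r(x)$ for $x\in\overline U$ and $g(x)=x$ for $x\in D\setminus U$. The two definitions agree on $\overline U\cap(D\setminus U)=\partial U\sus K$, where $r$ is the identity. Both pieces are closed sets, so $g$ is continuous by the pasting lemma. The map never takes the value $p$, because $p\in U$ is not in $K$ and $p\notin D\setminus U$. Composing with the radial projection from $p$ onto the circle $\partial D$ gives a continuous map $D\to\partial D$ that is the identity on $\partial D$, since $\partial D\cap\overline U=\emptyset$. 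This is a retraction of the disc onto its boundary circle.

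\textbf{Step 3 (contradiction).} No retraction of a disc onto its boundary circle exists; this is equivalent to Brouwer's fixed point theorem in dimension two. So $\R^2\setminus K$ is connected.

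The main obstacle is Step 3. It is the only genuinely topological input and has not been established earlier in the paper. I would prove it either via Sperner's lemma, which gives Brouwer's theorem for a triangle, or via winding numbers: the identity loop on $\partial D$ has degree $1$, while its image under a homotopy through the disc would have degree $0$. A secondary point needing care is Step 1. Choosing $U$ with $\partial U\sus K$ is routine once we know components of the open set $\R^2\setminus K$ are open, but building $r$ explicitly from $f^{-1}$ requires Tietze's theorem for the compact set $K$. I would prove that directly by the standard distance-based formula in metric spaces.
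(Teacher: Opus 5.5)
Your proof is correct. It is the classical Borsuk-style non-separation argument, and the delicate points check out:
\begin{itemize}
\item the bounded component $U$ satisfies $\partial U\sus K$;
\item the retraction $r=f\circ\max(a,\min(b,F))$ is well defined, where $F$ is a Tietze extension of the single, real-valued map $f^{-1}\cc K\to I$ (so there is only one ``coordinate'' to extend);
\item the pasting is legitimate, because $\overline U$ and $D\setminus U$ are closed and meet in $\partial U\sus K$;
\item $\overline U=U\cup\partial U\sus B(p,R)$, so the final map fixes $\partial D$.
\end{itemize}

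Note, however, that the paper contains no proof of Theorem~\ref{thm_AT}. It states the AT only to explain why the JT is harder than the WJT, and defers its proof to a separate contribution. So there is no in-paper argument to compare with, and the meaningful comparison is with the paper's methods. Everything the paper proves is built from scratch out of PL approximation, parity of crossings with vertical half-lines, and connectivity bookkeeping.

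Your argument instead imports the two-dimensional no-retraction (Brouwer) theorem, which is of roughly the same depth as the WJT itself. Proving it via winding numbers needs a (mod~$2$) degree for continuous loops and its homotopy invariance. That is essentially the PL-approximation and crossing-parity machinery of Filippov's proof.

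Your route buys brevity and generality: verbatim, it shows that no compact retract of the plane (for instance a finite tree of arcs) separates the plane. The cost is that, at the paper's standard of rigor, all the real work moves into your Step~3, which you would still have to carry out in full.
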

We write about this overlooked cornerstone of the theory of planar 
graphs in a~separate contribution \cite{klaz}, where we also complete 
the presentation of a~proof of the JT.

\medskip\noindent
{\bf Thomassen's proof of the WJT. }In 1992, C.~Thomassen found in \cite{thom} an 
interesting proof 
of the 
JT based on planar graphs. He actually obtained combinatorial 
proofs of the more general 
Jordan--Sch\"onflies theorem and of the theorem on classification of surfaces. In 
Section~\ref{sec_Thomassen} we present only a~part of \cite{thom}, 
Thomassen's proof of the WJT. It employs the following virtual 
configurations. 
A~{\em $K_{3,3}$-configuration} has two disjoint three-element sets 
$U,V\sus\R^2$ and nine 
arcs $f_{u,v}\cc I\to\R^2$ for $u\in U$ and $v\in V$ such that 
$f_{u,v}$ joins $u$ to $v$ and 
$$
f_{u,\,v}^0\cap f_{u',\,v'}^0=\emptyset=f_{u,\,v}^0\cap(U\cup V) 
$$
for every two pairs $\langle u,v\rangle\ne\langle u',v'\rangle$. It 
is a~plane drawing, without crossings, of the  complete bipartite 
graph $K_{3,3}$; these configurations do not exist. A~PL {\em map} is a~map $f\cc [a,b]\to\R^2$ such that 
for a~partition $a=a_0<a_1<\ds<a_k=b$ of $[a,b]$, all restrictions $f\,|\,
[a_{i-1},a_i]$ for $i=1,2,\ds,k$ are non-constant linear maps. Every PL map is continuous. The straight segments 
$f[\,[a_{i-1},a_i]\,]$  
($\sus\R^2$) 
are the {\em segments of $f[I]$}. The points $f(a_i)$ ($\in\R^2$) are 
the {\em corners of $f[I]$}.
In a~PL {\em $K_{3,3}$-configuration}, the nine arcs are PL maps. Thomassen's proof of the WJT in 
\cite{thom}, that we present in Section~\ref{sec_Thomassen}, is the sequence claim~1 $\Rightarrow$ 
claim~2 $\Rightarrow$ claim~3 $\Rightarrow$ claim~4, where the four claims are as follows.
\begin{enumerate}
\item The Intermediate JT holds for PL circuits: the complement of any PL 
circuit is a~disjoint union of two nonempty open connected sets.
\item PL $K_{3,3}$-configurations do not exist.
\item $K_{3,3}$-configurations do not exist.
\item The WJT holds.
\end{enumerate}
Thomassen's main invention is the last implication claim~3 $\Rightarrow$ claim~4. 

\medskip\noindent
{\bf Filippov's proof of the WJT. }Another proof of the WJT is contained in the 
proof of the JT given by A.\,F. Filippov in \cite{fili} in 1950. We 
present this proof  in Section~\ref{sec_Filippov}. We divide it in three steps.
\begin{enumerate}
\item Let $f,g\cc I=[a,b]\to\R^2$ be PL maps such that 
$f[I]\cap g[I]=\emptyset$ and that either $f$ is closed or 
$g[I]$ lies between the two vertical lines going through the points $f(a)$ 
and $f(b)$. Consider the function 
$$
N=N(c)=N(c,\,f)\cc g[I]\to\{0,1\}
$$ such that  
$N(c)$ is the parity of the number of simple intersections of the vertical 
half-line going up from the point $c$ with $f[I]$ (we provide a~precise 
definition later). Then $N$ is constant.

\item The function $N(c,f)$ is additive in the variable $f$. If $f$ is not closed 
and the point $c$ lies between the two vertical lines and below all 
intersections of the line $x=c_x$ with $f[I]$, then $N(c)=1$.

\item Let $f\cc I\to\R^2$ be a~circuit. The results in steps~1 
and~2 are used to obtain two distinct points $c,d\in\R^2\setminus f[I]$ such 
that for every PL map $g$ joining $c$ to $d$ we have $g[I]\cap f[I]\ne\emptyset$. This means that the 
complement $\R^2\setminus f[I]$ is disconnected and the WJT holds.
\end{enumerate}

\noindent
{\bf Why this presentation of the two proofs? }They are beautiful, 
but they are not rigorous by modern standards. Since we like them, we 
decided to 
elevate them to the present level of rigor. See also the discussion in 
Section~\ref{sec_concl}.

\medskip\noindent
{\bf Auxiliary results. }In the rest of Section~\ref{sec_intro} we present, 
with proofs, auxiliary results that will be needed in Sections~\ref{sec_Thomassen} 
and \ref{sec_Filippov}. Since they are mostly standard, the impatient reader may 
now jump to the two proofs and check 
any auxiliary result only if the need arises.

We begin with three constructions related to maps. Let $I=[a,b]$ and $J=[c,d]$ 
be real intervals with $a<b$ and $c<d$, and let $f\cc I\to X$ be 
a~map. We can parametrize 
$f[I]$ by $J$
via the map $g=f(h)\cc J\to X$, where $h\cc J\to I$ is 
the linear homeomorphism $h(x)=\frac{b-a}{d-c}(x-c)+a$. Clearly, $g$ 
joins $g(c)=f(a)$ to $g(d)=f(b)$ in $X$. If $f$ is a PL map, then so is $g$, and 
the same holds for being a continuous map, an arc, or a~circuit. Thus we 
can use any compact interval $I=[\al,\be]$, $\al<\be$, as the 
common definition domain of PL maps, (PL) arcs, and (PL) circuits.

Let $f,g\cc I=[a,b]\to X$ be two maps such that $f(b)=g(a)$. We write 
$f+g$ for their concatenation  $h=f+g\cc I\to X$ 
given by $h(x)=f(2x-a)$ for $a\le x\le \frac{1}{2}(a+b)$, and by $h(x)=g(2x-
b)$ for $\frac{1}{2}(a+b)\le x\le b$. 
Clearly, $h$ joins $h(a)=f(a)$ to $h(b)=g(b)$ in $X$. If $f$ 
and $g$ are PL maps, then so is $h$, and the same holds for continuity. Let 
in addition $f[I]\cap g[\,(a,b]\,]=\emptyset$, and let
$f$ and $g$ be arcs. Then $f+g$ is an arc. Let in addition  $f[I]\cap g^0=\emptyset$, $g(b)=f(a)$, and let $f$ and $g$ be arcs. Then $f+g$ is a~circuit.

Finally, let $U$, $V$, and 
$\{f_{u,v}\cc\;u\in V,v\in V\}$ be a~$K_{3,3}$-configuration. For 
any of the arcs $f_{u,v}\cc I=[a,b]\to\R^2$, we denote by $f_{v,u}\cc 
I\to\R^2$ the reverse arc 
$$
f_{v,\,u}(x)=f_{u,\,v}(a+b-x)\,.
$$

The first two auxiliary results concern closed sets in metric spaces. 
Recall that a~set $X\sus M$ in a~metric space 
$\langle M,d\rangle$ is {\em closed} if the complement $M\setminus X$ is 
an open set, that is, if for every point $a\in M\setminus X$ there exists an $r>0$
such that the ball $B(a,r)\sus X\setminus M$. It is well known that $X\sus M$ is closed if and only if for every sequence $(a_n)\sus X$ with $\lim a_n=a\in M$ we have $a\in X$.

\begin{prop}\label{prop_uniClo}
In any metric space, the union of two closed sets is a~closed set.    
\end{prop}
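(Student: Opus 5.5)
We prove Proposition~\ref{prop_uniClo} with the sequential characterization of closed sets recalled just before the statement: $X\sus M$ is closed if and only if every sequence $(a_n)\sus X$ with $\lim a_n=a\in M$ has $a\in X$. So let $X,Y\sus M$ be closed, and let $(a_n)\sus X\cup Y$ be a~sequence converging to some $a\in M$. We have to show that $a\in X\cup Y$.

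The key step is a~pigeonhole split of the indices. Each $a_n$ lies in $X$ or in $Y$, so at least one of the index sets
$$
N_X=\{n\in\N\cc\;a_n\in X\}\,\text{ and }\,N_Y=\{n\in\N\cc\;a_n\in Y\}
$$
is infinite, because their union is $\N$. Suppose $N_X$ is infinite; the case of $N_Y$ is symmetric. List its elements increasingly, $n_1<n_2<\ds$. This gives a~subsequence $(a_{n_k})$ lying entirely in $X$. A~subsequence of a~convergent sequence converges to the same limit: for every $\ep>0$ all but finitely many $a_n$ lie in $B(a,\ep)$, and hence so do all but finitely many $a_{n_k}$. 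Thus $\lim a_{n_k}=a$, and since $X$ is closed, $a\in X\sus X\cup Y$.

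As a~cross-check, there is also a~direct argument from the open-set definition. If $a\notin X\cup Y$, there are radii $r_1,r_2>0$ with $B(a,r_1)\cap X=\emptyset$ and $B(a,r_2)\cap Y=\emptyset$. Then $r=\min(r_1,r_2)$ gives a~ball $B(a,r)$ disjoint from $X\cup Y$, so the complement of $X\cup Y$ is open. The definition in the text contains a~typo ($X\setminus M$ instead of $M\setminus X$), and I would read it in the intended way. No step is a~real obstacle. The only point needing care is the pigeonhole step: we need an infinite index set, not merely infinitely many terms counted with repetition, so that we obtain a~genuine subsequence.
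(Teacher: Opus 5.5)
Your argument is correct. Your ``cross-check'' is precisely the paper's proof: for $a\in M\setminus(X\cup Y)$ one takes $B(a,r)\subset M\setminus X$ and $B(a,r')\subset M\setminus Y$ and passes to $B(a,\min(r,r'))$.

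Your main route is genuinely different. You verify the sequential criterion for $X\cup Y$ by a pigeonhole extraction of a subsequence lying entirely in $X$ or entirely in $Y$. Both arguments extend verbatim to finitely many closed sets, but they rest on different foundations:
\begin{itemize}
\item The paper's proof uses only the definition of a closed set (complement open). It is self-contained and amounts to the statement that a finite intersection of open sets is open, which holds in any topological space.
\item Your version needs both directions of the sequential characterization that the paper merely quotes as well known. You use ``closed $\Rightarrow$ sequentially closed'' for $X$ or $Y$, and ``sequentially closed $\Rightarrow$ closed'' for $X\cup Y$. The latter direction is the nontrivial one. It is specific to metric (first countable) spaces, and its proof requires choosing a point of the set in each ball $B(a,1/n)$. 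So logically you push the real work into that cited equivalence.
\end{itemize}
In exchange, your argument fits the sequence-based style of the paper's later auxiliary results (e.g.\ Proposition~\ref{prop_distComSet1}). Your caution that you need an infinite index set $N_X$ or $N_Y$, rather than infinitely many terms counted with repetition, is exactly the right point of care. You are also right that $X\setminus M$ in the paper's definition of closedness is a typo for $M\setminus X$.
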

\proof
Let $X,Y\sus M$ be closed sets in a~metric space $\langle M,d\rangle$, 
and let 
$$
a\in M\setminus(X\cup Y)=(M\setminus X)\cap(M\setminus Y)\,.
$$ 
We have $B(a,r)\sus M\setminus X$ and $B(a,r')\sus M\setminus Y$ for some $r,r'>0$.  Thus 
$$
B(a,\,r'')\sus (M\setminus X)\cap(M\setminus Y)
$$
for $r''=\min(r,r')$ ($>0$). We see that $X\cup Y$ is closed.
\eproof

\begin{prop}\label{prop_imagClos}
Let $\ell\sus\R^2$ be a~vertical half-line going up or down from a~point 
$c\in\R^2$. Then $\ell$ is a~closed set.      
\end{prop}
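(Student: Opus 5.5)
We prove that the complement $\R^2\setminus\ell$ is open. Write $c=\langle c_x,c_y\rangle$. We treat the half-line going up,
$$
\ell=\{\langle c_x,y\rangle\cc\;y\ge c_y\}\,;
$$
the downward case is symmetric: replace $y\ge c_y$ by $y\le c_y$ and reverse the inequalities below.

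Let $a=\langle a_x,a_y\rangle\in\R^2\setminus\ell$. There are two cases.

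In the first case, $a_x\ne c_x$. Set $r=|a_x-c_x|>0$. For any $b\in B(a,r)$ we have $|b_x-a_x|\le d_2(a,b)<r$, hence $b_x\ne c_x$. So $b\notin\ell$, and therefore $B(a,r)\sus\R^2\setminus\ell$.

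In the second case, $a_x=c_x$. Since $a\notin\ell$, we must have $a_y<c_y$. Set $r=c_y-a_y>0$. For any $b\in B(a,r)$ we have $|b_y-a_y|\le d_2(a,b)<r$, so $b_y<a_y+r=c_y$. Thus $b\notin\ell$, and again $B(a,r)\sus\R^2\setminus\ell$.

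In both cases we used only the elementary inequalities $|b_x-a_x|\le d_2(a,b)$ and $|b_y-a_y|\le d_2(a,b)$. These hold because each coordinate difference squared is at most the sum of the two squares under the root in the definition of $d_2$.

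Alternatively, one could write $\ell$ as the intersection of the vertical line $\{x=c_x\}$ and the half-plane $\{y\ge c_y\}$. Each of these is a preimage of a closed subset of $\R$ under a continuous coordinate projection, so each is closed. But Proposition~\ref{prop_uniClo} concerns unions, not intersections, so that route would need an additional lemma. The direct ball argument avoids this, and it is the approach taken here.

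There is no real obstacle in this argument. The only point requiring care is the case split according to whether $a$ lies on the vertical line through $c$, together with the coordinate inequality for $d_2$. Hence $\R^2\setminus\ell$ is open and $\ell$ is closed.

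\eproof
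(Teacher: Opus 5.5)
Your proof is correct and follows the paper's argument: the same case split on whether $a_x=c_x$, with radius $r=|a_x-c_x|$ or $r$ equal to the vertical gap between $a$ and the endpoint $c$, giving a ball around $a$ that misses $\ell$. The differences are cosmetic: you treat the upward half-line and check that each point of $B(a,r)$ lies outside $\ell$, whereas the paper treats the downward one and notes that $d_2(a,b)\ge r$ for every $b\in\ell$.
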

\proof
Suppose that $\ell$ goes down (if it goes up, we argue similarly) and that 
$a\in\R^2\setminus\ell$. If $a_x=c_x$, we define $r=a_y-c_y$ 
($>0$), and if $a_x\ne c_x$, we set $r=|a_x-c_x|$ ($>0$). Then 
$d_2(a,b)\ge r$ for every point $b\in\ell$, and therefore 
$B(a,r)\sus\R^2\setminus\ell$.
\eproof

\noindent
The use of the positivity of the distance between two disjoint plane 
sets, one 
of which is compact and the other is closed but 
non-compact (for example, a~half-line), is an interesting maneuver in 
Filippov's proof.  

The second group of auxiliary results concerns compactness in metric spaces.
Recall that a~set $X\sus M$ in a~metric space 
$\langle M,d\rangle$ is {\em compact} if every sequence in $X$ has a~convergent 
subsequence with a~limit in $X$. We take for granted the compactness of 
intervals $I=[a,b]$ ($\sus\R^2$).

\begin{prop}\label{prop_imComp}
Let $\langle M,d\rangle$ and $\langle N,e\rangle$ be metric spaces, $f\cc M\to N$ be a~continuous map, and let $X\sus M$ be a~compact set. Then $f[X]$ is a~compact subset of $N$.      
\end{prop}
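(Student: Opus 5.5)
The plan is to work directly from the sequential definition of compactness adopted in the paper, together with the sequential characterization of continuity. No covering arguments will be needed.

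First, I fix an arbitrary sequence $(b_n)\sus f[X]$. For each $n$ I choose a point $a_n\in X$ with $f(a_n)=b_n$; this is possible by the definition of $f[X]=\{f(x)\cc\;x\in M\cap X\}$, and it uses only countable choice. This gives a sequence $(a_n)\sus X$.

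Next, since $X$ is compact, $(a_n)$ has a subsequence $(a_{k_n})$ with $\lim a_{k_n}=a$ for some $a\in X$.

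I then show that continuity transfers this convergence to the images. Given $\ep>0$, continuity of $f$ at $a$ yields $\de>0$ such that $d(x,a)<\de\Rightarrow e(f(x),f(a))<\ep$. Since $d(a_{k_n},a)<\de$ for all large $n$, we get $e(b_{k_n},f(a))<\ep$ for all large $n$. Hence $\lim b_{k_n}=f(a)$.

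Finally, because $a\in X$, the limit $f(a)$ lies in $f[X]$. So $(b_{k_n})$ is a subsequence of $(b_n)$ converging to a point of $f[X]$, and $f[X]$ is compact in $\langle N,e\rangle$.

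There is no real obstacle here; the only point needing care is the step from $\ep$–$\de$ continuity to sequential continuity. The argument above supplies that step inline, so no separate result has to be quoted.
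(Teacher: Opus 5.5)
Your proof is correct and follows the paper's argument: choose preimages $a_n\in X$, extract a subsequence converging to some $a\in X$, and use continuity at $a$ to get $\lim b_{k_n}=f(a)\in f[X]$. The only difference is that you derive sequential continuity from the $\varepsilon$--$\delta$ definition inline, whereas the paper just uses that fact (it records it in the remark on continuity in Section~1).
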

\proof
Let $(b_n)\sus f[X]$ be a~sequence. Using the axiom of choice, we select 
elements $a_n\in X$ such that $f(a_n)=b_n$. By 
the compactness of $X$, there is a~subsequence $(a_{m_n})$ of $(a_n)$ 
with $\lim a_{m_n}=a\in X$. Since $f$ is continuous,  
$$
\lim b_{m_n}=\lim f(a_{m_n})=f(\lim a_{m_n})=f(a)\in f[X]\,.
$$
\eproof

\begin{prop}\label{prop_CoClBo}
Let $\langle M,d\rangle$ be a~metric space and $X\sus M$ be a~compact set. Then $X$ is a~closed and bounded set.   
\end{prop}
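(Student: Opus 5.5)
The plan is to prove the two properties separately, each by contradiction, using the sequential definition of compactness and the sequential characterization of closed sets recalled just before Proposition~\ref{prop_uniClo}.

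\emph{Closedness.} By the recalled characterization, it suffices to take a sequence $(a_n)\sus X$ with $\lim a_n=a\in M$ and show that $a\in X$. By compactness of $X$, some subsequence $(a_{m_n})$ converges to a limit $a'\in X$. A subsequence of a convergent sequence converges to the same limit, so $\lim a_{m_n}=a$. Limits in a metric space are unique: if $d(a,a')>0$, the triangle inequality $d(a,a')\le d(a,a_{m_n})+d(a_{m_n},a')$ would force a positive quantity to be arbitrarily small. Hence $a=a'\in X$, and $X$ is closed.

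\emph{Boundedness.} If $X=\emptyset$, then $X$ lies in any ball. Otherwise fix a point $p\in X$ and suppose $X$ is unbounded. Then for every $n\in\N$ the ball $B(p,n)$ does not contain $X$, so (using choice, as in Proposition~\ref{prop_imComp}) we can select $a_n\in X$ with $d(p,a_n)\ge n$. By compactness, some subsequence $(a_{m_n})$ converges to a point $a\in X$. For large $n$ we then have $d(a_{m_n},a)<1$, and the triangle inequality gives
$$
m_n\le d(p,\,a_{m_n})\le d(p,\,a)+1\,.
$$
The right-hand side is a constant, while $m_n\ge n\to\infty$, which is a contradiction. So $X\sus B(p,r)$ for some $r$, and $X$ is bounded.

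No step is a real obstacle. The only points needing care are the uniqueness of limits, which is used implicitly in the closedness part, and the observation that boundedness with respect to some ball implies the condition with respect to balls centred at a fixed point $p$. The second point is not actually needed, because I argue directly with balls centred at $p$: failure to lie in every ball $B(p,n)$ is already what the definition of unboundedness gives.
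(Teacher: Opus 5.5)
Your proof is correct and takes essentially the paper's route: both argue through sequential compactness, and your closedness argument is the paper's, with the uniqueness of limits (which the paper uses tacitly) made explicit. The only difference is in the unbounded case. The paper takes a $1$-separated sequence ($d(a_m,a_n)\ge1$ for $m<n$), which has no convergent subsequence; you take $a_n$ with $d(p,a_n)\ge n$ and apply the triangle inequality around the limit, which is slightly more direct because it avoids the inductive construction of a separated sequence.
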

\proof
We show that if $X$ is not closed or not bounded, then it is not compact. 
If $X$ is not closed, then there is a~sequence $(a_n)\sus X$ such that $\lim a_n=a\in M\setminus X$. 
Such $(a_n)$ does not have any subsequence with a~limit in $X$. If 
$X$ is not bounded, then there exists a~sequence $(a_n)\sus X$ such that 
$d(a_m,a_n)\ge1$ for every $m<n$. Such $(a_n)$ does not have any convergent 
subsequence. 
\eproof

\begin{prop}\label{prop_compOpen}
For any continuous map $f\cc I\to\R^2$, the complement
$\R^2\setminus f[I]$ is an open set. 
\end{prop}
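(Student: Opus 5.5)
The plan is to chain the preceding compactness results. The interval $I=[a,b]$ is compact, as we take for granted, and $f$ is continuous. Proposition~\ref{prop_imComp}, applied with $M=I$ (with the metric inherited from $\R$), $N=\R^2$ and $X=I$, therefore shows that $f[I]$ is a~compact subset of $\R^2$. Proposition~\ref{prop_CoClBo} then tells us that $f[I]$ is closed in $\langle\R^2,d_2\rangle$. By the definition of a~closed set, its complement $\R^2\setminus f[I]$ is open, and this is exactly the claim.

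If I wanted to avoid relying on the closed-set characterization implicit in Proposition~\ref{prop_CoClBo}, I would argue directly. Take a~point $c\in\R^2\setminus f[I]$ and suppose, for contradiction, that no ball $B(c,r)$ lies in the complement. Then for every $n$ there is an $x_n\in I$ with $d_2(f(x_n),c)<1/n$. By compactness of $I$, some subsequence $x_{m_n}$ converges to a~point $x\in I$. Continuity of $f$ gives $f(x_{m_n})\to f(x)$, while also $f(x_{m_n})\to c$. Uniqueness of limits in a~metric space then yields $c=f(x)\in f[I]$, a~contradiction. So some ball $B(c,r)$ avoids $f[I]$, and the complement is open.

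There is no real obstacle here. The only points needing a~little care are these:
\begin{itemize}
\item Proposition~\ref{prop_imComp} is applied with $I$ itself as the ambient metric space $M$.
\item The notion of ``closed'' in Proposition~\ref{prop_CoClBo} is relative to the ambient space $\R^2$, so that complement-openness is meant in $\R^2$.
\end{itemize}
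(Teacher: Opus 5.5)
Your proof is correct and is exactly the paper's argument: Proposition~\ref{prop_imComp} gives compactness of $f[I]$, and Proposition~\ref{prop_CoClBo} gives that it is closed, so its complement is open. Your alternative direct sequence argument is also valid, but it only unpacks those two propositions.
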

\proof
By Proposition~\ref{prop_imComp}, the image $f[I]$ is a~compact subset of 
$\R^2$. By Proposition~\ref{prop_CoClBo}, this 
image is a~closed set.
\eproof

\begin{prop}\label{prop_exitEnter}
Let $\langle M,d\rangle$ be a~metric space, $X\sus M$ be a~closed set, and 
let $f\cc I=[a,b]\to M$ be a~continuous map. 
If $f[I]\cap X\ne\emptyset$, then the subset $f^{-1}[X]$ of $I$ has both 
minimum and maximum. 
\end{prop}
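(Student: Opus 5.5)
The plan is to show that $Y=f^{-1}[X]$ is a nonempty, closed and bounded subset of $\R$. Then its supremum and infimum exist and belong to $Y$.

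First, nonemptiness. Since $f[I]\cap X\ne\emptyset$, there is an $x\in I$ with $f(x)\in X$, so $Y\ne\emptyset$. Also $Y\sus I=[a,b]$, so $Y$ is bounded. Hence by completeness of the reals, $s=\sup Y$ and $t=\inf Y$ exist and lie in $[a,b]$.

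Next, closedness, using the sequential characterization of closed sets recalled above. Let $(x_n)\sus Y$ with $\lim x_n=x$. Since $a\le x_n\le b$ for all $n$, also $x\in I$; that is, $I$ is closed, which follows from the compactness of $I$ together with Proposition~\ref{prop_CoClBo}. By continuity of $f$ we have $f(x_n)\to f(x)$. Each $f(x_n)$ lies in $X$ and $X$ is closed in $M$, so $f(x)\in X$, which gives $x\in Y$. Thus $Y$ is closed in $\R$.

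Finally, attainment. By the definition of supremum, for each $n$ there is a $y_n\in Y$ with $s-1/n<y_n\le s$, so $y_n\to s$. By closedness of $Y$, $s\in Y$, and so $s=\max Y$. Symmetrically, $t=\min Y$.

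There is no real obstacle here. The only point needing care is checking that the limit $x$ stays in the domain $I$ before applying $f$, which is exactly the closedness of $I$ noted in the second step.
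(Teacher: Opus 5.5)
Your proposal is correct and follows the same approach as the paper, which notes in one line that $f^{-1}[X]$ is nonempty, closed and bounded, so its infimum and supremum belong to it. You add the routine verifications that the paper leaves implicit: the sequential closedness argument, including that limits stay in $I$, and the fact that the supremum is attained.
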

\proof
Since the nonempty set $f^{-1}[X]$ is closed and bounded, $\inf(f^{-1}[X])$ 
and $\sup(f^{-1}[X])$ are elements of it.
\eproof

Recall that two sets $X,Y\sus M$ in a~metric space 
$\langle M,d\rangle$ have distance
$$
d(X,\,Y)=\inf(\{d(x,\,y)\cc\;x\in X,\,y\in Y\})\ \ (\in[0,\,+\infty))\,.
$$
For $X=\{x\}$ we write just $d(x,Y)$ instead of $d(\{x\},Y)$.

\begin{prop}\label{prop_distComSet}
Let $\langle M,d\rangle$ be a~metric space and let
$X,Y\sus M$ be nonempty compact sets that are disjoint. Then $d(X,Y)>0$.    
\end{prop}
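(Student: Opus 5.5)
I will argue by contradiction, in the sequential style of the preceding propositions. Suppose $d(X,Y)=0$. By the definition of the distance as an infimum, for every $n\in\N$ there exist points $x_n\in X$ and $y_n\in Y$ with $d(x_n,y_n)<\frac{1}{n}$; using the axiom of choice as in the proof of Proposition~\ref{prop_imComp}, we fix such sequences $(x_n)\sus X$ and $(y_n)\sus Y$. Nonemptiness of $X$ and $Y$ ensures that the infimum is taken over a nonempty set, so these points exist.

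Next I will use compactness twice. Since $X$ is compact, there is a subsequence $(x_{m_n})$ with $\lim x_{m_n}=x\in X$. The corresponding sequence $(y_{m_n})$ lies in the compact set $Y$, so it has a further subsequence $(y_{k_n})$, where $(k_n)$ is a subsequence of $(m_n)$, with $\lim y_{k_n}=y\in Y$. Since $(x_{k_n})$ is a subsequence of the convergent sequence $(x_{m_n})$, we still have $\lim x_{k_n}=x$.

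Finally, the triangle inequality gives
$$
d(x,\,y)\le d(x,\,x_{k_n})+d(x_{k_n},\,y_{k_n})+d(y_{k_n},\,y)<d(x,\,x_{k_n})+\frac{1}{k_n}+d(y_{k_n},\,y)
$$
for every $n$. The right-hand side tends to $0$, so $d(x,y)=0$ and therefore $x=y$. This point lies in $X\cap Y$, which contradicts the disjointness of $X$ and $Y$. Hence $d(X,Y)>0$.

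The only point requiring care is the passage to a common subsequence: one must extract the convergent subsequence for $Y$ from the already-extracted indices $(m_n)$, rather than independently, so that the index sets match. I expect this step to be the main obstacle, though it is a minor one. Everything else follows directly from the definitions and the sequential characterization of compactness used in the paper.
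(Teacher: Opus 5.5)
Your proof is correct and essentially the paper's: the paper omits a direct proof and deduces the statement from the stronger Proposition~\ref{prop_distComSet1}, whose proof is the same sequential contradiction argument. The only difference is that the paper does not need the second extraction you flag as the main obstacle. Once $x_{m_n}\to x$ and $d(x_{m_n},y_{m_n})\to0$, the triangle inequality already gives $y_{m_n}\to x$, so it suffices that $Y$ is closed (Proposition~\ref{prop_CoClBo}). The paper needs this weaker hypothesis later in Filippov's proof, where one of the two sets contains a half-line.
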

We omit the proof because we prove a~stronger result.

\begin{prop}\label{prop_distComSet1}
Let $\langle M,d\rangle$ be a~metric space and let
$X,Y\sus M$ be nonempty sets such that $X$ is compact, $Y$ is closed and $X\cap Y=\emptyset$. Then $d(X,Y)>0$.  
\end{prop}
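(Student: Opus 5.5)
I will argue by contradiction, using sequences and the sequential characterizations of compactness and closedness recalled above. Suppose $d(X,Y)=0$. By the definition of the infimum, for every $n\in\N$ there exist points $x_n\in X$ and $y_n\in Y$ with $d(x_n,y_n)<\frac{1}{n}$; I select such pairs using the axiom of choice, as in the proof of Proposition~\ref{prop_imComp}. This is possible because $X$ and $Y$ are nonempty, so the infimum is over a nonempty set.

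Next I use the compactness of $X$. The sequence $(x_n)\sus X$ has a subsequence $(x_{m_n})$ with $\lim x_{m_n}=x\in X$. I then show that the corresponding subsequence $(y_{m_n})$ converges to the same point $x$. By the triangle inequality,
$$
d(y_{m_n},\,x)\le d(y_{m_n},\,x_{m_n})+d(x_{m_n},\,x)<\frac{1}{m_n}+d(x_{m_n},\,x)\,.
$$
Since $m_n\ge n$, both terms on the right tend to $0$, and so $\lim y_{m_n}=x$.

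Finally I use the closedness of $Y$. We have $(y_{m_n})\sus Y$ and $\lim y_{m_n}=x\in M$, so by the sequential characterization of closed sets, $x\in Y$. Hence $x\in X\cap Y$, which contradicts $X\cap Y=\emptyset$. Therefore $d(X,Y)>0$.

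There is no real obstacle in this argument. The only points needing care are that both sets are nonempty, so the infimum is attained in the limit by actual points, and that closedness of $Y$ alone is enough once the limit point has been produced by compactness of $X$. Proposition~\ref{prop_distComSet} then follows immediately, because a compact set is closed by Proposition~\ref{prop_CoClBo}.
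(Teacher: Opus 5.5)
Your proof is correct and follows essentially the same route as the paper's. Both assume $d(X,Y)=0$, use compactness of $X$ to get a subsequence of $(x_n)$ converging to some $x\in X$, carry the limit over to $(y_n)$ by the triangle inequality, and get $x\in X\cap Y$ from the closedness of $Y$; you are simply more explicit about choosing $d(x_n,y_n)<1/n$ and tracking the subsequence indices.
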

\proof
For the contrary, let $d(X,Y)=0$. Then there exist 
sequences $(x_n)\sus X$ and $(y_n)\sus Y$ such that $\lim d(x_n,y_n)=0$. 
Since $X$ is compact, we may assume that $\lim x_n=x\in X$. Using the 
triangle inequality, we see that also
$\lim y_n=x$. Since $Y$ is closed, $x\in Y$. Thus $x\in X\cap Y$, which 
contradicts the disjointness of $X$ and $Y$.
\eproof

\noindent
In Thomassen's proof, Proposition~\ref{prop_distComSet} 
suffices. Filippov's proof uses also
Proposition~\ref{prop_distComSet1}.

Recall that a~map $f\cc M\to N$ between metric spaces $\langle 
M,d\rangle$ and $\langle N,e\rangle$ is {\em continuous at a~point $a\in 
M$} if for every $\ep>0$ there is $\de>0$ such that $f[B(a,\de)]\sus 
B(f(a),\ep)$. The map $f$ is {\em continuous} if it is continuous at 
every point in $M$. The map $f$ is {\em uniformly continuous} if for 
every $\ep>0$ there is $\de>0$ such that $d(x,y)\le\de$ $\Rightarrow$ 
$e(f(x),f(y))\le\ep$ for every $x,y\in M$. It is easy to show that for every 
continuous map $f$ and every open (closed) set $X\sus N$ the preimage $f^{-1}[X]$ is 
an open (closed) set in $M$. If $f$ is continuous at $a$ then for 
every sequence $(a_n)\sus M$ with $\lim a_n=a$ we have $\lim 
f(a_n)=f(a)$.

\begin{prop}\label{prop_uniCon}
Let $\langle M,d\rangle$ and $\langle N,e\rangle$ be metric spaces, the 
former being compact, and let $f\cc M\to N$ be a~continuous map. Then $f$ 
is uniformly continuous.    
\end{prop}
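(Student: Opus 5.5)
We argue by contradiction, using the sequential definition of compactness recalled above, in the same style as the proofs of Propositions~\ref{prop_imComp} and \ref{prop_distComSet1}. Suppose $f$ is not uniformly continuous. Negating the definition, there is an $\ep>0$ such that for every $\de>0$ some points $x,y\in M$ satisfy $d(x,y)\le\de$ but $e(f(x),f(y))>\ep$. Taking $\de=1/n$ for $n=1,2,\ds$ and using the axiom of choice (as in the proof of Proposition~\ref{prop_imComp}), we obtain sequences $(x_n),(y_n)\sus M$ with
$$
d(x_n,\,y_n)\le 1/n\ \text{ and }\ e(f(x_n),\,f(y_n))>\ep\ \text{ for every $n$}\,.
$$

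Since $M$ is compact, there is a subsequence $(x_{m_n})$ with $\lim x_{m_n}=a\in M$. The triangle inequality gives $d(y_{m_n},a)\le d(y_{m_n},x_{m_n})+d(x_{m_n},a)\to0$, so also $\lim y_{m_n}=a$. This step is the same as the corresponding one in the proof of Proposition~\ref{prop_distComSet1}.

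Now $f$ is continuous at $a$, so by the remark preceding the statement, $\lim f(x_{m_n})=f(a)=\lim f(y_{m_n})$. By the triangle inequality in $\langle N,e\rangle$,
$$
e(f(x_{m_n}),\,f(y_{m_n}))\le e(f(x_{m_n}),\,f(a))+e(f(a),\,f(y_{m_n}))\to0\,.
$$
This contradicts $e(f(x_{m_n}),f(y_{m_n}))>\ep$ for all $n$, so $f$ is uniformly continuous.

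There is no real obstacle. The only points needing care are the correct negation of uniform continuity and the selection of the two sequences.
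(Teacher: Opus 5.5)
Your proof is correct and follows the paper's argument. Like the paper, you negate uniform continuity to get sequences with $d(x_n,y_n)\to0$ but $e(f(x_n),f(y_n))$ bounded below, use compactness to make both converge to a common $a\in M$, and contradict continuity at $a$; you merely spell out the triangle-inequality steps that the paper leaves implicit.
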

\proof
Suppose, for the contrary, that $f$ is not uniformly continuous. Then there 
exist $\ep>0$ and sequences $(a_n),(b_n)\sus M$ such that $\lim 
d(a_n,b_n)=0$, but $e(f(a_n),f(b_n))\ge\ep$ for every 
$n$. Due to the compactness of $M$, we may assume that $\lim a_n=\lim b_n=a$
($\in M$). But $\lim f(a_n)=\lim f(b_n)=f(a)$ ($\in N$) does not hold, in 
contradiction with the continuity of $f$ at $a$.
\eproof

The third group of auxiliary results concerns connectedness.

\begin{prop}\label{prop_conImaCon}
Let $\langle M,d\rangle$ and $\langle N,e\rangle$ be metric spaces, $X\sus M$ be a~connected set and $f\cc M\to N$ be
a~continuous map. Then $f[X]$ is a~connected subset of $N$.
\end{prop}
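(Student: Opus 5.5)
We argue by contraposition: if $f[X]$ is disconnected, then so is $X$. The idea is to pull a cutting pair of sets for $f[X]$ back along $f$.

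Suppose open sets $A,B\sus N$ cut $f[X]$, that is,
$$
A\cup B\supset f[X]\;\&\;A\cap f[X]\ne\emptyset\ne B\cap f[X]\;\&\;A\cap B\cap f[X]=\emptyset\,.
$$
Put $A'=f^{-1}[A]$ and $B'=f^{-1}[B]$. These are subsets of $M$. Since $f$ is continuous and $A,B$ are open, the fact recalled just before Proposition~\ref{prop_uniCon} gives that $A'$ and $B'$ are open in $M$.

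We now check the three conditions for $A',B'$ to cut $X$.
\begin{enumerate}
\item Covering: for $x\in X$ we have $f(x)\in f[X]\sus A\cup B$, so $x\in A'\cup B'$. Hence $X\sus A'\cup B'$.
\item Nonemptiness: pick $y\in A\cap f[X]$, so $y=f(x)$ for some $x\in X$. Then $x\in A'\cap X$. Symmetrically, $B'\cap X\ne\emptyset$.
\item Disjointness: if $x\in A'\cap B'\cap X$, then $f(x)\in A\cap B\cap f[X]$, which is empty. So $A'\cap B'\cap X=\emptyset$.
\end{enumerate}
Thus $A'$ and $B'$ cut $X$, so $X$ is disconnected, contradicting the hypothesis. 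Hence $f[X]$ is connected.

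There is no real obstacle here. The only points needing care are that the paper's notion of cutting uses sets open in the ambient space, and that preimages of open sets under continuous maps are open. The paper states the latter as easy; if needed we would verify it directly from the $\ep$–$\de$ definition of continuity at each point of $f^{-1}[A]$.
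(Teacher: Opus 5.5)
Your proof is correct and follows the paper's argument exactly: you pull back open sets $A,B$ that cut $f[X]$ to the open sets $f^{-1}[A]$ and $f^{-1}[B]$, which then cut $X$. The only difference is that you write out the three cutting conditions, which the paper leaves as ``easy to see''.
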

\proof
We show that if $f[X]$ is disconnected, then so is $X$. Let 
$A,B\sus N$ be open sets cutting $f[X]$. Since $f$ is continuous, the 
subsets $f^{-1}[A]$ and $f^{-1}[B]$ of $M$ are open. It is easy to see that 
they cut $X$.
\eproof

A~graph $G=\langle V,E\rangle$ ($E\sus\binom{V}{2}$) is usually defined as {\em connected} if every two vertices $u,v\in V$ can be joined in 
$G$ by a~path (equivalently, by a~walk). Else $G$ is {\em disconnected}. Recall that a~{\em partition} of a~set $X$ is a~set $Y$ such that 
the elements of $Y$ are nonempty and mutually disjoint sets such that $\bigcup Y=X$.  

\begin{prop}\label{prop_connGr}
A~graph $G=\langle V,E\rangle$ is disconnected if and only if there exists a~partition $\{A,B\}$ of $V$ such that no edge $e\in E$ joins $A$ and $B$. 
\end{prop}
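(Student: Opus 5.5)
We prove the two implications separately, with the usual definition of connectedness via paths (equivalently walks), so the whole argument reduces to one reachability construction.

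\emph{Disconnected implies a partition.} Suppose $G$ is disconnected. Then there are vertices $u,v\in V$ that cannot be joined by a path in $G$. Let $A$ be the set of all vertices $w\in V$ that can be joined to $u$ by a walk; the trivial walk shows $u\in A$. Let $B=V\setminus A$. Since a walk from $u$ to $v$ can be shortened to a path, $v\notin A$, so $v\in B$. Thus $A$ and $B$ are nonempty, disjoint and cover $V$, and $\{A,B\}$ is a partition of $V$.

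Next suppose some edge $\{x,y\}\in E$ had $x\in A$ and $y\in B$. Appending this edge to a walk from $u$ to $x$ gives a walk from $u$ to $y$, so $y\in A$, a contradiction. Hence no edge joins $A$ and $B$. The only point that needs care is this passage between walks and paths. Working throughout with walks, which are closed under extension by an edge, and invoking the stated equivalence with paths only at the end, avoids it.

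\emph{A partition implies disconnected.} Suppose $\{A,B\}$ is a partition of $V$ with no edge joining $A$ and $B$. Pick $u\in A$ and $v\in B$, and assume for contradiction that there is a walk $u=w_0,w_1,\dots,w_k=v$ with each $\{w_{i-1},w_i\}\in E$. Since $w_0\in A$ and $w_k\notin A$, there is a least index $i$ with $w_i\notin A$, that is, $w_i\in B$. Then $i\ge1$ and $w_{i-1}\in A$, so the edge $\{w_{i-1},w_i\}$ joins $A$ and $B$, a contradiction. Equivalently, one shows by induction on $i$ that every $w_i$ lies in $A$. So $u$ and $v$ cannot be joined by a walk, and $G$ is disconnected.

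No step is a genuine obstacle; this is a routine fact.
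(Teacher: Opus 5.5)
Your proof is correct and follows the paper's argument: in one direction the set of vertices reachable from $u$ and its complement form the partition, and in the other any $u$--$v$ path would have to use an edge joining $A$ and $B$. Working with walks and the least-index argument only spells out details the paper states without proof; in particular, walks avoid the path-extension issue when showing that no edge leaves $A$.
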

\proof
Suppose that $G$ is disconnected and that $u,v\in V$ cannot be joined in $G$ by any path. We set
$$
A=\{w\in V\cc\;\text{a~path in $G$ joins $u$ to $w$}\}\,\text{ and }\,B=V\setminus A\,.
$$
Then $u\in A$, $v\in B$ and no edge $e\in E$ joins $A$ and $B$. So $\{A,B\}$ is the desired partition of $V$.

Suppose that $A$ and $B$ are as stated, and take any $u\in A$
and $v\in B$. Any path in $G$ joining $u$ and $v$ would contain an edge
$e\in E$ joining $A$ and $B$, which means that there is no such path. We see that $G$ is disconnected. 
\eproof

The {\em intersection graph} corresponding to a~set system $X_v$, $v\in V$, is the graph $\langle V,E\rangle$ 
with the edges $E=\{\{u,v\}\cc\;u,v\in V,u\ne v,X_u\cap X_v\ne\emptyset\}$.

\begin{prop}\label{prop_connByGr}
Let $\langle M,d\rangle$ be a~metric space, let  
$X_v\sus M$, $v\in V$, be a~set system of connected sets 
and let the intersection graph $G=\langle V,E\rangle$ be connected. Then the set
$$
X=\bigcup_{v\in V}X_v\ \ (\sus M)
$$
is connected.
\end{prop}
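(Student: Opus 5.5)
We argue by contradiction, following the pattern of the proofs of Propositions~\ref{prop_conImaCon} and \ref{prop_connGr}. Suppose that $X$ is disconnected, and let $A,B\sus M$ be open sets that cut $X$, so that
$$
A\cup B\supset X,\quad A\cap X\ne\emptyset\ne B\cap X,\quad A\cap B\cap X=\emptyset\,.
$$
The plan is to use $A$ and $B$ to build a~partition of the vertex set $V$ with no edge of $G$ joining its two parts. By Proposition~\ref{prop_connGr} this would make $G$ disconnected, which is the desired contradiction.

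The first step is to see that each $X_v$ lies entirely on one side. Fix $v\in V$. We have $X_v\sus X\sus A\cup B$ and $A\cap B\cap X_v\sus A\cap B\cap X=\emptyset$. If $X_v$ met both $A$ and $B$, then $A$ and $B$ would cut $X_v$, contradicting its connectedness. Hence $X_v\cap B=\emptyset$ or $X_v\cap A=\emptyset$. If $X_v$ is empty, both hold and we place $v$ on the $A$ side. Otherwise exactly one holds, because $X_v\sus A\cup B$ forces a nonempty $X_v$ to meet $A$ or $B$. So we define
$$
V_A=\{v\in V\cc\;X_v\cap B=\emptyset\}\quad\text{and}\quad V_B=V\setminus V_A\,,
$$
and then $X_v\sus A\setminus B$ for $v\in V_A$ and $X_v\sus B\setminus A$ for $v\in V_B$.

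The second step checks that $\{V_A,V_B\}$ is a~partition of $V$. Pick $x\in A\cap X$. Then $x\in X_v$ for some $v$, and $x\notin B$ because $A\cap B\cap X=\emptyset$. So $X_v$ meets $A$ but not $B$, which gives $v\in V_A\ne\emptyset$. In the same way a~point of $B\cap X$ lies in some $X_w$ that meets $B$, so $w\in V_B\ne\emptyset$.

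The third step shows that no edge joins $V_A$ and $V_B$. If $u\in V_A$ and $w\in V_B$, then
$$
X_u\cap X_w\sus (X\setminus B)\cap(X\setminus A)=X\setminus(A\cup B)=\emptyset\,,
$$
so $\{u,w\}\notin E$. Proposition~\ref{prop_connGr} now says that $G$ is disconnected, which contradicts the hypothesis. Therefore $X$ is connected.

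The only delicate point is the bookkeeping in the first step: the nonemptiness of $V_A$ and $V_B$ must be witnessed by actual points of $A\cap X$ and $B\cap X$, and possibly empty sets $X_v$ must be placed consistently. Otherwise the argument is routine.
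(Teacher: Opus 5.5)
Your proof is correct and follows the same route as the paper: the cutting sets $A,B$ force each $X_v$ to one side, which gives a partition $\{V_A,V_B\}$ of $V$ with no edge between the parts, and Proposition~\ref{prop_connGr} then yields the contradiction. Your bookkeeping is slightly more careful than the paper's, which defines $V_A=\{v\in V\cc\;X_v\sus A\}$ and $V_B=\{v\in V\cc\;X_v\sus B\}$; this tacitly ignores empty sets $X_v$, which would lie in both, and the paper does not explicitly check that both parts are nonempty.
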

\proof
Suppose that $X$ is disconnected and that the sets $A,B\sus M$ cut $X$. Every set $X_v$ is completely contained either in $A$ or 
in $B$, and we get the partition $\{V_A,V_B\}$ of $V$ by the sets 
$$
V_A=\{v\in V\cc\;X_v\sus A\}\,\text{ and }\,
V_B=\{v\in V\cc\;X_v\sus B\}\,.
$$
Since $A\cap B\cap X=\emptyset$, there is no edge $e\in E$ joining $V_A$ and $V_B$. Hence $G$ is disconnected by Proposition~\ref{prop_connGr}. 
\eproof

\noindent
We prove, as an application, that balls in the plane are connected.

\begin{prop}\label{prop_ballConn}
Every ball in the metric space $\langle\R^2,d_2\rangle$ is connected. 
\end{prop}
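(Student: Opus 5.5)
The plan is to write the ball $B=B(a,r)$ as a union of radial segments and apply Proposition~\ref{prop_connByGr}. For every point $b\in B$, consider the map
$$
s_b\cc[0,1]\to\R^2,\quad s_b(t)=a+t(b-a)\,,
$$
that is, $s_b(t)=\langle a_x+t(b_x-a_x),\,a_y+t(b_y-a_y)\rangle$. We set $X_b=s_b[\,[0,1]\,]$, the straight segment from $a$ to $b$, and take $V=B$ as the index set.

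First we check that each $s_b$ is continuous. This follows from
$$
d_2(s_b(t),\,s_b(t'))=|t-t'|\cdot d_2(a,\,b)\,,
$$
so $s_b$ is even Lipschitz. Since the interval $[0,1]$ is connected, Proposition~\ref{prop_conImaCon} shows that each $X_b$ is connected.

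Second, we check that $X_b\sus B$. For $t\in[0,1]$,
$$
d_2(a,\,s_b(t))=t\,d_2(a,\,b)\le d_2(a,\,b)<r\,,
$$
so every point of the segment lies in the ball. Also $b=s_b(1)\in X_b$, and therefore
$$
\bigcup_{b\in B}X_b=B\,.
$$

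Third, we consider the intersection graph of the system $X_b$, $b\in B$. Every set $X_b$ contains the center $a=s_b(0)$, so any two sets $X_b$ and $X_{b'}$ intersect. The intersection graph is therefore complete, and in particular connected. When $B$ has more than one point this is clear; when $V$ is a single vertex the graph is trivially connected. By Proposition~\ref{prop_connByGr}, the union $B$ is connected.

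There is no real obstacle in this argument. The only steps needing care are the continuity estimate for $s_b$, which reduces to the identity $d_2(s_b(t),s_b(t'))=|t-t'|\,d_2(a,b)$, and the inclusion $X_b\sus B$. The same argument shows more generally that every convex subset of $\R^2$ is connected.
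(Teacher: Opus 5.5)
Your argument is correct and is essentially the paper's proof: the ball is written as the union of the radial segments from the center, each segment is connected by Proposition~\ref{prop_conImaCon}, and the intersection graph is complete, so Proposition~\ref{prop_connByGr} applies. The paper indexes the segments by $B\setminus\{b\}$ (with $b$ the center) and so avoids the degenerate segment, whereas you include that segment as the image of a constant map, which is harmless; you also check explicitly that each segment lies in the ball, which the paper leaves implicit.
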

\proof
Let $B=B(b,r)=\{a\in\R^2\cc\;d_2(a,b)<r\}$ be a~ball in the Euclidean 
plane. For any point $a\in 
B\setminus\{b\}$ we denote by $S_a$ the straight 
segment joining $b$ and $a$. Every $S_a$ is connected by 
Proposition~\ref{prop_conImaCon}, as a~continuous image of an interval. 
Thus 
$$
{\textstyle
B=\bigcup_{a\in B\setminus\{b\}}S_a
}
$$
is connected by Proposition~\ref{prop_connByGr} (the intersection graph is complete).
\eproof

\noindent
In general, balls in metric spaces need not be connected.

The following proposition is the most intricate of the auxiliary results. 
We need a~lemma for it.

\begin{lem}\label{lem_isOpen}
Let $X\sus\R^2$ be an open set, let $u,v\in X$ with $u\ne v$, and let there be a~{\em PL} arc joining $u$ to $v$ in $X$. Then there exists 
a~$v$-centered ball $B\sus X$ such that for every point $w\in B$ there 
exists a~{\em PL} arc joining $u$ to $w$ in $X$.     
\end{lem}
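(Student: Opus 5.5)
Let $f\cc I=[a,b]\to X$ be the given PL arc joining $u$ to $v$, with corners $f(a_0)=u,f(a_1),\ds,f(a_k)=v$. Since $X$ is open, fix a~$v$-centered ball $B(v,r)\sus X$. The plan is to modify $f$ only near its end: shrink the ball so that it meets $f[I]$ only along the last segment, and then reroute from a suitable point of that segment straight to $w$. By Proposition~\ref{prop_ballConn} (or directly, by convexity) every straight segment between two points of a~ball lies in the ball, hence in $X$.

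First, the shrinking. The set $K=f[\,[a,a_{k-1}]\,]$ is compact by Proposition~\ref{prop_imComp}. It does not contain $v$, since $f$ is injective and $a_{k-1}<b$. Here I assume $k\ge2$; if $k=1$, $f$ is a~single segment and I take $K=\emptyset$. By Proposition~\ref{prop_distComSet1} applied to $\{v\}$ and $K$, we have $d(v,K)>0$. Take $0<\rho\le\min(r,d(v,K))$ and let $B=B(v,\rho)$. Then $B\cap f[I]$ lies in the last segment $S=f[\,[a_{k-1},b]\,]$, and $u\notin B$ because $u\in K$ (or, when $k=1$, because we may also require $\rho<d_2(u,v)$).

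Second, the rerouting. Let $w\in B$. If $w\in f[I]$, then $w\in S$, and restricting $f$ to $[a,f^{-1}(w)]$ and reparametrizing gives a~PL arc from $u$ to $w$ in $X$; for $w=u$ this case cannot occur. Otherwise, by Proposition~\ref{prop_exitEnter} applied to the closed set $\overline{B(v,\rho/2)}\supset\{v\}$, let $t$ be the minimum of $f^{-1}\bigl[\overline{B(v,\rho/2)}\bigr]$ (I may instead just use the closed ball of radius $d_2(v,w)$). Better is the following choice: let $t$ be the minimum of $f^{-1}[\overline{B(v,s)}]$, where $s=d_2(v,w)<\rho$. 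Then $p=f(t)$ lies on $S$ at distance $s$ from $v$, and $f[\,[a,t)\,]$ stays outside the closed ball of radius $s$. The new path is $f\,|\,[a,t]$ followed by the straight segment from $p$ to $w$. I would rather use the chord from $p$ to $w$, where both endpoints lie on the sphere of radius $s$. The open chord then lies strictly inside $B(v,s)$ by strict convexity of the Euclidean ball, so it misses $f[\,[a,t]\,]$. If $p=w$, there is nothing to add. The concatenation $f\,|\,[a,t]+g$, with $g$ the chord, is then a~PL map in $X$. It is injective because of the disjointness condition stated for concatenations of arcs: the chord meets $f[\,[a,t]\,]$ only at $p$. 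So it is a~PL arc from $u$ to $w$.

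The main obstacle is the strict-convexity step: showing that the interior of a~chord between two points of the circle $d_2(\cdot,v)=s$ lies at distance $<s$ from $v$. This holds because $d_2(v,\cdot)^2$ is strictly convex along lines, or by a~direct computation $|\lambda x+(1-\lambda)y|^2<s^2$ for $|x|=|y|=s$, $x\ne y$, $0<\lambda<1$. A~secondary point is that $t>a$ and that the restricted map is still PL after reparametrization; the degenerate case where $t$ falls exactly at a~corner is harmless.
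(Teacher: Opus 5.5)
Your argument is correct, but the rerouting step differs from the paper's.

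\textbf{The paper's route.} It also shrinks the ball so that it meets $f[I]$ only in the last segment $s_k$. Then, for $w\in B\setminus s_k$, it simply appends the segment $vw$ to the whole of $f$. Injectivity is left as ``easy to see''. It holds because $vw$ and $s_k$ both issue from $v$, so they can share a second point only if $w$ lies on the ray from $v$ through $c_{k-1}$, beyond $c_{k-1}$. That is impossible because $c_{k-1}\notin B$.

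\textbf{Your route.} You instead cut $f$ at the first time $t$ it reaches $\overline{B}(v,s)$, with $s=d_2(v,w)$. You then close up with the chord from $f(t)$ to $w$, paying for this with the strict convexity of the Euclidean disc.

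\textbf{What each buys.} The paper's route is shorter and needs only that two segments with a common endpoint either overlap collinearly or meet only there. However, it genuinely relies on $B\cap f[I]\subset s_k$. Your route never uses that isolation: only $u\notin B$ and $B\subset X$ matter, so $\rho<\min(r,d_2(u,v))$ would suffice. Hence your argument does not depend on the shape of $f$ near $v$ at all. You also explicitly secure $u\notin B$ when $k=1$, a point the paper's proof passes over. For $k=1$ its condition on $s_1,\dots,s_{k-1}$ is vacuous, and either $w=u$ or $w$ on the ray from $v$ through $u$ beyond $u$ would break its construction.

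\textbf{For the write-up.} Delete the abandoned attempt with $\overline{B(v,\rho/2)}$. Also, do not cite Proposition~\ref{prop_ballConn} for the convexity of balls: connectedness does not give convexity, whereas the triangle inequality does.
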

\proof
Let $f\cc I=[a,b]\to X$ be a~PL arc 
joining $u$ to $v$ in $X$, let $a=a_0<a_1<\ds<a_k=b$ be the 
partition of $[a,b]$, and let $s_i=f[\,[a_{i-1},a_i]\,]$ for 
$i=1,2,\ds, k$ be the segments of $f[I]$. Since $d_2(s_i,v)>0$ for 
$i=1,2,\ds,k-1$ (Propositions~\ref{prop_imComp} and \ref{prop_distComSet}), we can take small enough $r>0$ such 
that $B=B(v,r)\sus X$ and $s_i\cap B=\emptyset$ for $i=1,2,\ds,k-1$. We 
define for every point $w\in B$ a~PL arc $f_w$ joining $u$ to $w$ in $X$. 
If $w\in B\cap s_k$, then $f_w$ is the initial part of $f$ ending at $w$. If 
$w\in B\setminus s_k$, then $f_w$ is the extension of $f$ by the 
straight segment joining $v$ to $w$. It is easy to see that $f_w$ is as 
claimed.
\eproof

\begin{prop}\label{prop_opArcCon}
Let $X\sus\R^2$ be an open connected set and let $u,v\in X$
with $u\ne v$. Then there exists a~{\em PL} arc joining $u$ to $v$ in $X$. 
\end{prop}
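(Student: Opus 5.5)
The plan is the standard clopen argument, with Lemma~\ref{lem_isOpen} doing the local work. Fix $u\in X$ and set
$$
A=\{u\}\cup\{w\in X\setminus\{u\}\cc\;\text{a~PL arc joins $u$ to $w$ in $X$}\}\,,\quad B=X\setminus A\,.
$$
We will show that $A$ and $B$ are both open. Since $X$ is connected and $u\in A$, this forces $B=\emptyset$. Otherwise the open sets $A$ and $B$ (open in $\R^2$, because $X$ is open) would cover $X$, both meet $X$, and have empty intersection, so they would cut $X$. Hence $v\in A$, which is the claim.

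\textbf{Openness of $A$.} Take $u\in X$ and a ball $B(u,r)\sus X$. For $w\in B(u,r)\setminus\{u\}$, the straight segment from $u$ to $w$ is a PL arc in $X$, since balls are convex. So $B(u,r)\sus A$. For $w\in A\setminus\{u\}$, Lemma~\ref{lem_isOpen} directly gives a $w$-centered ball contained in $X$ all of whose points, other than $u$, are reachable by PL arcs from $u$. Hence that ball lies in $A$.

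\textbf{Openness of $B$.} Let $w\in B$ and take a ball $B(w,r)\sus X$ with $r<d_2(u,w)$, so that $u\notin B(w,r)$. Suppose some $z\in B(w,r)$ lies in $A$. Since $z\ne u$, there is a PL arc $g$ joining $u$ to $z$ in $X$. If $z=w$, this already contradicts $w\in B$. If $z\ne w$, then the segment $\sigma$ from $z$ to $w$ lies in the ball, and we must produce a PL arc from $u$ to $w$, again a contradiction.

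This splicing is the main obstacle, because $g+\sigma$ need not be injective. To handle it, use Proposition~\ref{prop_exitEnter} with the closed set $\sigma$. The set $g^{-1}[\sigma]$ is nonempty, since it contains the endpoint, so it has a minimum $t$. Then $g(t)\in\sigma$, and the initial part of $g$ on $[a,t]$ meets $\sigma$ only at $g(t)$. We have $g(t)\ne u$ because $u\notin B(w,r)$, so $t>a$. Follow $g$ on $[a,t]$, reparametrized, and then the subsegment of $\sigma$ from $g(t)$ to $w$, skipping it if $g(t)=w$. This is a PL arc from $u$ to $w$ in $X$, by the concatenation remark for arcs. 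One detail remains: the restriction $g\,|\,[a,t]$ is PL even when $t$ is interior to a linear piece, since that piece is nonconstant and $t>a_{i-1}$.

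Both sets are therefore open, and the argument is complete. The remaining checks, such as the convexity of balls and the reparametrizations, are routine.
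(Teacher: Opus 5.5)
Your proof is correct and matches the paper's argument: the set of points PL-reachable from $u$ is open by Lemma~\ref{lem_isOpen}, its complement in $X$ is open by taking a ball that avoids $u$ and splicing at the first point where the arc meets the segment (Proposition~\ref{prop_exitEnter}), and connectedness of $X$ then finishes the proof. Your separate treatment of a ball around $u$ itself, using convexity, is a small improvement, because the paper cites Lemma~\ref{lem_isOpen} at that point although the lemma assumes $u\ne v$.
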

\proof
Let $Y\sus X$ be the set of points $x\in X$ such that $x=u$ or a~PL arc joins $u$ 
to $x$ in $X$. Clearly, $Y\ne\emptyset$ ($u\in Y$) and $Y$ is open (Lemma~\ref{lem_isOpen}). We 
show that also the set $Z=X\setminus Y$ is open. If $Z\ne\emptyset$, we 
have the contradiction that $X$ is disconnected. Thus $Z=\emptyset$ and 
$Y=X$, which implies that there exists a~PL arc joining $u$ to $v$ in 
$X$.

Let $y\in Z$. We take a~ball $B=B(y,r)\sus X$ such that $u\not\in 
B$, which is possible as $y\ne u$, and show that $B\sus Z$. Let $z\in B$ 
with $z\ne y$. Clearly, $z\ne u$. Suppose that $f\cc I\to X$ is a~PL arc joining $u$ to $z$ in $X$.
Let $s=yz$ ($\sus B$) be the straight segment joining $y$ and 
$z$. Considering the first intersection in $f[I]\cap s$ 
(Proposition~\ref{prop_exitEnter}), 
we obtain a~PL arc joining $u$ to $y$ in 
$X$, but this contradicts $y\in Z$. Thus $f$ does not exist and $z\in Z$. 
We see that $B\sus Z$. The set $Z$ is open. 
\eproof

The last group of auxiliary results is related to Thomassen's construction of 
a plane drawing of $K_{3,3}$ based on a~circuit whose complement is connected. 

\begin{prop}\label{prop_higLow}
Let $X\sus\R^2$ be a~compact set. Then there exist four points 
$u,v,r,s\in X$ such that $u_x$ and $v_x$ have, respectively, the maximum 
and minimum $x$-coordinate among all points in $X$, and $r$ and $s$ have 
an analogous property in the $y$-coordinate.    
\end{prop}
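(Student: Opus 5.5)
The plan is to reduce the statement to the extreme value property of a continuous real function on a compact set. Presumably $X$ is meant to be nonempty; for $X=\emptyset$ there is nothing to find. Consider the two coordinate projections $p_x,p_y\cc\R^2\to\R$, given by $p_x(a)=a_x$ and $p_y(a)=a_y$. Each is continuous, since $|a_x-b_x|\le d_2(a,b)$ and $|a_y-b_y|\le d_2(a,b)$, so we may take $\de=\ep$. By Proposition~\ref{prop_imComp}, the images $p_x[X]$ and $p_y[X]$ are compact subsets of $\R$. By Proposition~\ref{prop_CoClBo}, applied in the metric space $\R$, they are closed and bounded. Both are nonempty because $X$ is.

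Next, a nonempty bounded set $Y\sus\R$ has a finite supremum and infimum. If $Y$ is also closed, these belong to $Y$: $\sup Y$ is the limit of a sequence in $Y$, and closedness then places it in $Y$. This is the same reasoning as in the proof of Proposition~\ref{prop_exitEnter}. Hence $p_x[X]$ has a maximum $\max p_x[X]$ and a minimum $\min p_x[X]$, and similarly for $p_y[X]$.

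Finally, we pick preimages: $u\in X$ with $u_x=\max p_x[X]$, $v\in X$ with $v_x=\min p_x[X]$, $r\in X$ with $r_y=\max p_y[X]$, and $s\in X$ with $s_y=\min p_y[X]$. These points have exactly the required extremal properties. Note that the points need not be distinct, which the statement does not demand.

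There is no real obstacle here. The only point needing a word of care is the passage from "closed and bounded" to "the supremum is attained". Alternatively, one can argue directly: take a sequence in $X$ whose $x$-coordinates tend to $\sup p_x[X]$, extract a convergent subsequence with limit in $X$ using compactness, and conclude by continuity of $p_x$. I would present this direct version to stay self-contained.
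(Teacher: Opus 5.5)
Your proof is correct, and the direct version you say you would present is exactly the paper's argument: a sequence in $X$ whose $x$-coordinates tend to the infimum, a convergent subsequence with limit in $X$ by compactness, then continuity of $c\mapsto c_x$. Your primary route, applying Propositions~\ref{prop_imComp} and \ref{prop_CoClBo} to the coordinate projection, is an equally valid repackaging of the same idea, and your remark that $X$ must be nonempty makes explicit an assumption the paper leaves implicit.
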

\proof
We prove the existence of $v$ (the existence of the other three points 
is established similarly). Since $X$ is bounded (Proposition~\ref{prop_CoClBo}), there exists 
a~sequence $(a_n)\sus X$ such that
$$
\lim_{n\to\infty}(a_n)_x=\inf(\{a_x\cc\;a\in X\})=:\al\,.
$$
Since $X$ is compact, we 
may assume that $\lim a_n=v\in X$.
Since the function $c\mapsto c_x$ is continuous, 
$$
v_x=\big(\lim_{n\to\infty}a_n\big)_x
=\lim_{n\to\infty}(a_n)_x=\al
$$
and $v$ is a~leftmost point of $X$.
\eproof

\begin{prop}\label{prop_CircNoSe}
Let $f\cc I=[a,b]\to\R^2$ be a~circuit. Then there exist four points $u,v,r,s\in f[I]$ such that $u_x<v_x$ and $r_y<s_y$.    
\end{prop}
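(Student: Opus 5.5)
The statement says that the image of a circuit is neither contained in a vertical line nor in a horizontal line. I would prove the horizontal-extent part, $u_x<v_x$, and get $r_y<s_y$ by the same argument with the roles of the coordinates exchanged. The points $u,v$ will be distinct points of $f[I]$ that differ in their $x$-coordinate. Suppose, for contradiction, that all points of $f[I]$ have the same $x$-coordinate $\al$. Then $f[I]$ lies on the vertical line $x=\al$. Consider the continuous function $g\cc I\to\R$, $g(t)=f(t)_y$. Since $f(t)=(\al,g(t))$, the map $g$ inherits from $f$ the circuit property: $g(x)=g(y)$ for $x\ne y$ if and only if $\{x,y\}=\{a,b\}$.

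The main step is to show that no such real function exists. Pick $t_1<t_2$ in $(a,b)$, say $t_1=a+\frac{1}{3}(b-a)$ and $t_2=a+\frac{2}{3}(b-a)$. The three values $g(a)=g(b)$, $g(t_1)$ and $g(t_2)$ are pairwise distinct, so their relative order gives a few cases.
\begin{enumerate}
\item If $g(a)$ lies strictly between $g(t_1)$ and $g(t_2)$, then by the intermediate value theorem $g$ attains the value $g(a)$ at some point $t\in(t_1,t_2)$. This contradicts injectivity on $[a,b)$.
\item Otherwise $g(a)$ is below both values or above both. Say it is below both, and let $m=\min(g(t_1),g(t_2))$. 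Then $g(a)<m$. Every value strictly between $g(a)$ and $m$ is attained on $(a,t_1)$, since $g(t_1)\ge m$. The same values are also attained on $(t_2,b)$, since $g(t_2)\ge m$ and $g(b)=g(a)$. This gives two distinct points of $(a,b)$ with equal values, again a contradiction.
\end{enumerate}
The intermediate value theorem itself follows from earlier results. By Proposition~\ref{prop_conImaCon}, the image $g[J]$ of a subinterval $J$ is connected. A connected subset of $\R$ that contains two numbers contains everything between them, because otherwise the two half-lines to either side of a missing value would cut it.

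With this contradiction in hand, the set $\{f(t)_x\cc\;t\in I\}$ has at least two elements. Choose $t,t'$ with $f(t)_x<f(t')_x$ and put $u=f(t)$, $v=f(t')$. Alternatively, take the leftmost and rightmost points given by Proposition~\ref{prop_higLow}, which are then distinct in $x$-coordinate. Note that the statement's labelling, where $u$ is rightmost in Proposition~\ref{prop_higLow}, does not matter here. Only the strict inequality is required, so I would simply choose the labels to fit it.

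The main obstacle is the real-function step above: a continuous map from $[a,b]$ to $\R$ cannot be injective on $[a,b)$ while satisfying $g(a)=g(b)$. The only care needed there is the case split on the order of the three values. Everything else is bookkeeping.
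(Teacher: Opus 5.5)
Your proof is correct and is essentially the paper's argument: assume $f[I]$ lies on a vertical line, then apply the intermediate value theorem to the $y$-coordinate on both sides of an interior parameter to get two distinct parameters in $(a,b)$ with the same image. The paper uses only the midpoint $\frac{1}{2}(a+b)$, which makes your case~1 unnecessary: since $g(\frac{1}{2}(a+b))\ne g(a)=g(b)$, every value strictly between them is attained on both $(a,\frac{1}{2}(a+b))$ and $(\frac{1}{2}(a+b),b)$, which is exactly your case~2 with $t_1=t_2$.
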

\proof
We prove the result for $u$ and $v$; the argument for $r$ and $s$ is 
similar. If $u$ and $v$ did not exist, then $f[I]$ would be a~vertical 
straight segment $\{e\}\times[c,d]$.
Let $f(a)=f(b)=\langle e,\al\rangle$ and 
$f(\frac{1}{2}(a+b))=
\langle e,\be\rangle$. Then $\al\ne\be$, for example $\al>\be$. By 
the intermediate value theorem, for every $\ga\in(\be,\al)$ there exist numbers $x\in(a,\frac{1}{2}(a+b))$ and
$y\in(\frac{1}{2}(a+b),b)$ such that 
$$
f(x)=f(y)=\langle e,\,\ga\rangle\,.
$$
This contradicts the almost injectivity of $f$.
\eproof

\begin{prop}\label{prop_delCykl}
Let $f\cc I\to\R^2$ be a~circuit and $u,v\in f[I]$ with $u\ne v$. Then there exist arcs $g,h\cc I\to\R^2$ such that 
$g^0\cap h^0=\emptyset$, $g[I]\cup h[I]=f[I]$, and  
$\mathrm{ep}(g)=\mathrm{ep}(h)=\{u,v\}$.
\end{prop}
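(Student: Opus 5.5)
Since $u,v\in f[I]$, we choose preimages $x,y\in[a,b]$ with $f(x)=u$ and $f(y)=v$. If one of $u,v$ equals $f(a)=f(b)$, we take the corresponding preimage to be $a$. Because $u\ne v$, we have $x\ne y$ and $\{x,y\}\ne\{a,b\}$, and we may assume $x<y$. The circuit $f$ is injective on $[a,b)$ and on $(a,b]$, and it identifies only $a$ with $b$. The plan is to split $[a,b]$ at $x$ and $y$ into the middle piece $[x,y]$ and the complementary piece $[y,b]\cup[a,x]$, which wraps around through the identified point $f(a)=f(b)$.

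\emph{Definition of $g$.} We let $g$ be $f\,|\,[x,y]$ reparametrized onto $I$ by the linear homeomorphism described earlier. Since $a\le x<y\le b$ and $[x,y]\ne[a,b]$ (because $\{x,y\}\ne\{a,b\}$), the map $f$ is injective on $[x,y]$. Hence $g$ is an arc joining $u$ to $v$.

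\emph{Definition of $h$.} If $x=a$, we set $h=f\,|\,[y,b]$, reparametrized; it joins $v$ to $f(b)=f(a)=u$. If $x>a$ and $y=b$, we set $h=f\,|\,[a,x]$, reversed; it joins $f(b)=v$ to $u$. These restrictions are injective because the intervals do not contain both $a$ and $b$. In the remaining case $a<x<y<b$, we set $h=h_1+h_2$, where $h_1$ is $f\,|\,[y,b]$ and $h_2$ is $f\,|\,[a,x]$, both reparametrized to $I$. Here $h_1(\text{end})=f(b)=f(a)=h_2(\text{start})$, so the concatenation is defined. It is an arc by the concatenation criterion stated earlier, because $h_1[I]\cap h_2[\,(a,b]\,]=f[\,[y,b]\,]\cap f[\,(a,x]\,]=\emptyset$ by almost injectivity. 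In every case $\mathrm{ep}(h)=\{u,v\}$.

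\emph{Remaining properties.} The union $g[I]\cup h[I]$ is $f$ of the whole parameter set, which is $f[I]$ (using $f(a)=f(b)$ when $a$ or $b$ is omitted). For $g^0\cap h^0=\emptyset$ we compare parameter sets. The interior of $g$ is $f[(x,y)]$. The interior of $h$ is $f$ applied to the parameters of $[y,b]\cup[a,x]$ other than $x$ and $y$; in the wrap-around case these parameters include $a$ and $b$, but $f(a)=f(b)$ is the single shared junction point. The two parameter sets are disjoint subsets of $[a,b]$, and neither $a$ nor $b$ lies in $(x,y)$. Since $f(s)=f(t)$ for $s\ne t$ only when $\{s,t\}=\{a,b\}$, the images $g^0$ and $h^0$ are disjoint.

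The main obstacle is purely bookkeeping. One must handle the degenerate cases where $u$ or $v$ is $f(a)$, making sure that $x=a$ is chosen rather than $b$ so that $[x,y]\ne[a,b]$. One must also check carefully the injectivity of the concatenation at the junction point $f(a)=f(b)$.
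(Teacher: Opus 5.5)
Your proof is correct and is essentially the paper's: split the parameter interval at preimages of $u$ and $v$, let $g$ be the middle restriction, and let $h$ be the complementary piece that wraps around through $f(a)=f(b)$. The only difference is cosmetic: the paper writes $I=[c,d]$ and calls the preimages $a<b$, then defines $h$ on $[b,d-c+a]$ by a single formula that translates $[c,a]$ by $d-c$, so it needs neither your concatenation nor your case split (and your case $x>a$, $y=b$ is in fact vacuous, since you always choose the preimage $a$ for the base point).
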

\proof
Wlog $f(a)=u$ and $f(b)=v$, where $a<b$ are in $I=[c,d]$. We define $g=f\,|\,[a,b]$ and $h\cc[b,d-c+a]\to\R^2$ by 
$$
\text{$h(x)=f(x)$ for $b\le x\le d$ and 
$h(x)=f(x+c-d)$ for $d\le x\le d-c+a$}\,.
$$
According to the initial remark, we may redefine $g$ and $h$ on $I$. It 
is easy to see that $g$ and $h$ are as stated. 
\eproof

\begin{prop}\label{prop_interValu}
Let $f\cc I=[a,b]\to\R^2$ be continuous, $f(a)_y<f(b)_y$, and let $\ell$ be 
a~horizontal line with the $y$-coordinate in $(f(a)_y,f(b)_y)$. Then 
$\ell\cap f[I]\ne\emptyset$.
\end{prop}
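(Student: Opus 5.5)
The plan is to reduce the statement to the one-dimensional intermediate value theorem, applied to the $y$-coordinate of $f$. Let $\ell=\{\langle x,\ga\rangle\cc\;x\in\R\}$, where $\ga\in(f(a)_y,f(b)_y)$ is the $y$-coordinate of $\ell$. Consider the real function
$$
\varphi\cc I\to\R,\ \ \varphi(t)=f(t)_y\,.
$$
It is continuous, because it is the composition of the continuous map $f$ with the projection $c\mapsto c_y$. The projection is continuous since $|c_y-d_y|\le d_2(c,d)$; this was already used for the $x$-coordinate in the proof of Proposition~\ref{prop_higLow}. We have $\varphi(a)<\ga<\varphi(b)$.

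If the intermediate value theorem may be taken for granted, as it was in the proof of Proposition~\ref{prop_CircNoSe}, it gives $t\in(a,b)$ with $\varphi(t)=\ga$. Then $f(t)\in\ell\cap f[I]$ and we are done.

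To stay within the paper's framework, I would instead derive the needed value from the connectedness of intervals, which the paper takes for granted. By Proposition~\ref{prop_conImaCon}, the image $\varphi[I]$ is a connected subset of $\R$. Suppose, for contradiction, that $\ga\notin\varphi[I]$. Then the open sets $A=(-\infty,\ga)$ and $B=(\ga,+\infty)$ cut $\varphi[I]$:
\begin{itemize}
\item they cover $\varphi[I]$, since $\ga\notin\varphi[I]$;
\item $\varphi(a)\in A$ and $\varphi(b)\in B$, so both intersections with $\varphi[I]$ are nonempty;
\item $A\cap B=\emptyset$.
\end{itemize}
This contradicts the connectedness of $\varphi[I]$. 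Hence some $t\in I$ satisfies $f(t)_y=\ga$, that is, $f(t)\in\ell$.

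There is no real obstacle here. The only point needing care is justifying the continuity of the coordinate projection, and whether to cite the intermediate value theorem directly or use the connectedness argument above; I would choose the latter for consistency with the paper's stated assumptions.
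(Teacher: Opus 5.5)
Your proof is correct and matches the paper's, which simply applies the intermediate value theorem to the continuous function $f(t)_y$ on $I$. Your alternative derivation via Proposition~\ref{prop_conImaCon}, cutting $\varphi[I]$ by $(-\infty,\ga)$ and $(\ga,+\infty)$, is valid; it just unpacks that theorem from the connectedness of intervals, which the paper takes for granted.
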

\proof
We apply the intermediate value theorem to the continuous function $f(t)_y\cc I\to\R$. 
\eproof

\begin{prop}\label{prop_onT}
Let $A,B\sus[c,d]$ be nonempty closed sets that are disjoint. Then there exists a~subinterval
$[x,y]\sus[c,d]$ such that $x\in A$ and $y\in B$, or $x\in B$ and 
$y\in A$, and $(x,y)\cap(A\cup B)=\emptyset$.    
\end{prop}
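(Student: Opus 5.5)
Since $A$ and $B$ are nonempty, closed and disjoint in $[c,d]$, Proposition~\ref{prop_distComSet} (or Proposition~\ref{prop_distComSet1}) gives $\delta=d(A,B)>0$. I would pick $a_0\in A$ and $b_0\in B$; by symmetry of the statement in $A$ and $B$, assume $a_0<b_0$. Then I would shrink from the left. Let $A'=A\cap[a_0,b_0]$, which is nonempty (it contains $a_0$), closed and bounded, and let $x=\max A'$. This maximum exists because a nonempty closed bounded subset of $\R$ contains its supremum, as in the proof of Proposition~\ref{prop_exitEnter}. Since $b_0\in B$ and $A\cap B=\emptyset$, we have $x<b_0$.

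Next let $B'=B\cap[x,b_0]$, which is nonempty (it contains $b_0$) and closed, and set $y=\min B'$. Then $y\in B$, and $y>x$ because $x\in A$ and $x\notin B$. This gives a subinterval $[x,y]\sus[c,d]$ with $x\in A$ and $y\in B$.

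It remains to check $(x,y)\cap(A\cup B)=\emptyset$. A point $t\in(x,y)$ lying in $A$ would satisfy $t\in[a_0,b_0]$ and $t>x=\max A'$, which is impossible. A point $t\in(x,y)$ lying in $B$ would satisfy $t\in[x,b_0]$ and $t<y=\min B'$, which is also impossible. In the other case $a_0>b_0$ the same construction with the roles of $A$ and $B$ exchanged yields $x\in B$ and $y\in A$.

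The main point needing care is the existence of the maximum and minimum. This rests only on closedness and boundedness of the intersected sets, since the intersection of two closed sets is closed, as is easily checked in the style of Proposition~\ref{prop_uniClo}. Positivity of the distance is not actually needed; strictness $x<y$ comes from disjointness alone. No step appears to be a genuine obstacle.
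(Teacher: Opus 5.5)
Your proof is correct and is essentially the paper's argument: take $a\in A$ and $b\in B$ with $a<b$, let $x$ be the largest point of $A$ in $[a,b]$, and then let $y$ be the smallest point of $B$ in $[x,b]$. As you note yourself, the opening appeal to $d(A,B)>0$ is never used and can simply be deleted.
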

\proof
Wlog we have numbers $a<b$ such that $a\in A$ and $b\in B$. We define $x=\sup(\{\al\in[a,\,b]\cc\;\al\in A\})$. 
Then $x\in A$ and $x<b$. We define $y=\inf(\{\al\in[x,\,b]\cc\;\al\in B\})$. Then $y\in B$ and $y>x$. Clearly, 
$(x,y)\cap(A\cup B)=\emptyset$. Thus $[x,y]$ is the 
desired subinterval.
\eproof

\section{Thomassen's proof of the WJT}\label{sec_Thomassen}

{\bf Claim~1. The Intermediate JT holds for PL circuits. }Suppose that $f\cc I\to\R^2$ is a~PL circuit. We first 
prove that the open set $D=\R^2\setminus f[I]$ (Proposition~\ref{prop_compOpen}) is 
disconnected: we define a~continuous map $g\cc D\to\R$ such that $g[D]=
\{0,1\}$. By Proposition~\ref{prop_conImaCon}, the set $D$ is disconnected because 
$\{0,1\}$ is a~disconnected subset of $\R$. 

In order to define $g$ (we slightly simplify \cite{thom}), we take any point 
$a\in D$, draw a~vertical half-line $\ell(a)$ up from $a$, and consider the 
geometric intersection $P(a)=\ell(a)\cap f[I]$. If $P(a)=\emptyset$, we 
set $g(a)=0$. Else 
$$
P(a)=(z_1<z_2<\ds<z_k),\ 
k\ge1\,,
$$ 
where $<$ is the vertical order and every $z_i$ is either a~union of 
consecutive (vertical) segments of $f[I]$ or a~point in $f[I]$. The $z_i$ are 
mutually disjoint. We call $z_i$ a~{\em simple intersection} if locally 
near $z_i$, the set $f[I]$ lies on both sides of $\ell(a)$. Else, if 
locally $f[I]$ lies on only one side of $\ell(a)$, we call $z_i$ a~{\em 
double intersection}. We define $g(a)\in\{0,1\}$ as
$$
\text{$g(a)=$ the parity of the number of simple intersections $z_i$}\,.
$$

We show that $g$ is continuous. Let $a\in D$. We take a~ball $B=B(a,r)\sus D$
with radius $r>0$ so small that no half-line $\ell(a')$ with $a'\in B$ 
contains a~corner of $f[I]$ not present in $P(a)$. If we move $a$ to
$a'\in B$, simple intersections in $P(a)$ are preserved, 
and every double intersection either does not change (when $\ell(a')\sus\ell(a)$ or 
$\ell(a)\sus\ell(a')$) or disappears or decays into two simple (point) intersections. 
Importantly, no new intersections besides those born from $P(a)$ are introduced. It follows that the number of simple intersections increases by 
an even number in $\N_0=\{0,1,\ds\}$, and therefore $g(a)=g(a')$. The 
function $g$ is continuous at $a$ and on $D$.

Since $f[I]$ is a~union of finitely many straight segments, it 
follows that $f[I]$ has a~highest point, which is a~corner. 
Considering the values of $g$ near this highest corner, we see that 
$g[D]=\{0,1\}$. This concludes the proof that $D$ is disconnected.

We prove that
$$
D=\R^2\setminus f[I]=L(f)\cup R(f)
$$
for two nonempty open connected and disjoint sets $L(f)$ and $R(f)$. 
To define them, we consider the partition $a=a_0<a_1<\ds<a_k=b$ of the domain $I=[a,b]$ of $f$ and list the 
corners $c_i=f(a_i)$ and segments 
$s_i=f[\,[a_{i-1},a_i]\,]$ of $f[I]$ for $i=1,2,\ds,k$ in cyclic 
(indices are taken modulo $k$) order
$$
c_0,\,s_1,\,c_1,\,s_2,\,c_2,\,s_3,\,\ds,\,
c_{k-1},\,s_k,\,c_k=c_0\,.
$$
We fix one of the two orientations of $f[I]$ and define $L(f)$ as the set of points $c\in D$ that can be joined to $f[I]$ in $D$ from the left. It means that there 
is a~PL map $f_c\cc I=[a,b]\to\R^2$ such that $f_c(a)=c$, $f_c(b)\in f[I]$, $f_c^0\sus D$, and the last segment of 
$f_c[I]$ approaches $f[I]$ from the left side with respect to the fixed 
orientation. Note that $f_c^0\sus L(f)$. We similarly define $R(f)$, by joining $c\in D$ to $f[I]$ in $D$ from the right. 
It is easy to see that the sets $L(f)$ and $R(f)$ are nonempty and  
open (Lemma~\ref{lem_isOpen} and its proof), and that $L(f)\cup R(f)=D$. If they are 
connected, it follows that they are disjoint: if $L(f)\cap R(f)\ne\emptyset$, then $L(f)\cup R(f)=D$ is connected by
Proposition~\ref{prop_connByGr}, which contradicts the previous result.

So it remains to prove that $L(f)$ and $R(f)$ are connected sets. We 
prove that $L(f)$ is connected (the argument for $R(f)$ is similar). We 
fix a~point $c\in L(f)$ and for every point
$d\in L(f)$ we define a~connected set $C_d$ such that $c,d\in C_d\sus L(f)$. Then
$$
{\textstyle
L(f)=\bigcup_{d\in D}C_d
}
$$
is connected by Proposition~\ref{prop_connByGr} (the intersection graph is complete).

We define sets $C_d$. For  $i=1,2,\ds,k$, we denote by $\ell_i$ ($\ni c_i$) the 
axis of the two angles determined by $s_i$ and $s_{i+1}$. For $\de>0$ let $c_i(\de)\in\ell_i$ be the point on $\ell_i$ lying
in a distance of $\de$ from $c_i$ on the left of $f[I]$.
Let $s_i(\de)$ be the straight segment joining
$c_{i-1}(\de)$ and $c_i(\de)$. If $\de$ is sufficiently small, then
$$
s_i(\de)\cap s_j=\emptyset\,\text{ for }\,j=i-1,\,i,\,i+1\,.
$$
Also, $d_2(c,s_i)\le\de$ for every $c\in s_i(\de)$. It follows (Propositions~\ref{prop_imComp} and \ref{prop_distComSet}) that there is a~$\de_0>0$ such that $s_i(\de)\sus L(f)$ for 
every $\de\le\de_0$. Let 
$$
{\textstyle
C(\de)=\bigcup_{i=1}^k s_i(\de)\,.
}
$$
Then $C(\de)\sus L(f)$ for every sufficiently small $\de$.
Every set $C(\de)$ is connected by Proposition~\ref{prop_conImaCon} (which implies that every segment 
$s_i(\de)$ is connected) and Proposition~\ref{prop_connByGr} (the intersection graph spans a~$k$-cycle). Let $c,d\in L(f)$ be 
the above mentioned points, and let $f_c$ and $f_d$ be respective PL maps that join them to $f[I]$ in $D$ from the left. We take 
small enough $\de$ such that $C(\de)\sus L(f)$ and that 
$$
f_c^0\cap C(\de)\ne\emptyset\ne f_d^0\cap C(\de)\,.
$$
It follows that
$$
C_d=\{c\}\cup f_c^0\cup C(\de)\cup f_d^0\cup\{d\}\ \ (\sus L(f))
$$
is the desired set. It is connected by
Propositions~\ref{prop_conImaCon} and \ref{prop_connByGr}.

\medskip\noindent
{\bf Claim~2. PL $K_{3,3}$-configurations do not exist. }We assume that $U$, $V$ ($\sus\R^2$) and 
$\{f_{u,v}\cc\;u\in U,v\in V\}$ is a~PL $K_{3,3}$-configuration and obtain a~contradiction. Let 
$U=\{u_1,u_3,u_5\}$ and $V=
\{u_2,u_4,u_6\}$. We write $f_{i,j}$
instead of $f_{u_i,u_j}$.
We consider the PL circuit $C\cc I\to\R^2$ given by the $6$-cycle
$$
C=((((f_{1,\,2}+f_{2,\,3})+f_{3,\,4})+f_{4,\,5})+f_{5,\,6})+f_{6,\,1}\,.
$$
We fix one of the two orientations of $C[I]$ and consider the partition $D=\R^2\setminus C[I]=L(C)\cup 
R(C)$ of $D$ defined in Claim~1. Let
$$
e_1=f_{1,\,4},\ e_2=f_{2,\,5}\,
\text{ and }\,e_3=f_{3,\,6}
$$
be the three remaining arcs of the $K_{3,3}$-configuration. Each set $e_i^0$ 
lies completely either in $L(C)$ or in $R(C)$ ($e_i^0$ is 
connected by Proposition~\ref{prop_conImaCon}). 
Two of them, for example, $e_2^0$ and $e_3^0$, lie in one set, 
for example, $R(C)$. We consider the PL circuit $C'\cc I\to\R^2$ given by the $4$-cycle
$$
C'=((f_{2,\,3}+f_{3,\,4})+f_{4,\,5})+f_{5,\,2}\,.
$$
We orient $C'[I]$ in accordance with the chosen orientation of $C[I]$ (the 
first three arcs of $C'$ already have this orientation). Let 
$g=e_3+f_{6,5}$. We consider the partition
$$
D'=\R^2\setminus C'[I]=L(C')\cup R(C')\,.
$$
Near the corner $u_5$, the set $g^0$ ($\sus D'$) intersects $L(C')$. Near 
the corner $u_3$, $g^0$ intersects $R(C')$. Hence $L(C')$ and 
$R(C')$ cut $g^0$, which is a~contradiction because $g^0$ is 
connected by Proposition~\ref{prop_conImaCon}. In each of the  five other cases 
(when two $e_i^0$ lie in $L(C')$ or in $R(C')$) we obtain a~similar 
contradiction.

\medskip\noindent
{\bf Claim~3. $K_{3,3}$-configurations do not exist. }We assume that 
$U$, $V$, and 
$\{f_{u,v}\cc\;u\in U,v\in V\}$ is a~$K_{3,3}$-configuration. We 
transform it in a~PL $K_{3,3}$-configuration, which by Claim~2 is a~contradiction. Let $U=\{u_1, u_2,u_3\}$ and 
$V=\{u_4,u_5,u_6\}$. 
We again write 
$f_{i,j}$ instead of $f_{u_i,u_j}$. For $i=1,2,\ds,6$, let 
$$
B_i=\overline{B}(u_i,\,r)=\{x\in\R^2\cc\;d_2(x,\,u_i)\le r\}
$$ 
be mutually disjoint closed balls with the radius 
$r>0$ so small that $f_{i,j}[I]\cap B_l=\emptyset$ whenever $l\ne i,j$ (Propositions~\ref{prop_imComp} and \ref{prop_distComSet}). 
Let $D=\R^2\setminus\bigcup_{i=1}^6 B_i$. Using the arcs $f_{i,j}$ and Proposition~\ref{prop_exitEnter}, we 
obtain nine subarcs $g_{i,j}\cc I=[a,b]\to\R^2$ of $f_{i,j}$ such that $g_{i,j}$ joins
a~point $\al_{i,j}\in\partial B_i$ to a~point $\be_{i,j}\in\partial B_j$, that $g_{i,j}^0\sus D$ and that the nine 
sets $g_{i,j}[I]$ are mutually disjoint. Using 
Propositions~\ref{prop_imComp}, \ref{prop_distComSet}, 
and \ref{prop_uniCon}, we obtain $\de>0$ and nine partitions 
of $[a,b]$, 
$$
a=a_{0,\,i,\,j}<a_{1,\,i,\,j}<\ds<a_{k(i,\,j),\,i,\,j}=b\ \ (k(i,\,j)\in\N=\{1,2,\ds\})\,,
$$
such that 
$$
g_{i,\,j}(a_{l-1,\,i,\,j})\in B(g_{i,\,j}(a_{l,\,i,\,j}),\,\de)\,\text{ for }\,l=1,\,2,\,\ds,\,k(i,\,j)\,,
$$
and that
$$
B(g_{i,\,j}(a_{l,\,i,\,j}),\,\de)\cap B(g_{i',\,j'}(a_{l',\,i',\,j'}),\,\de)=
\emptyset
$$
for every $l=0,1,\ds,k(i,j)$, $l'=0,1,\ds,k(i',j')$ and pairs 
$\langle i,j\rangle\ne\langle i',j'\rangle$. Let
$$
{\textstyle
U_{i,\,j}=\bigcup_{l=0}^{k(i,\,j)}B(g_{i,\,j}(a_{l,\,i,\,j}),\,\de)\,.
}
$$
Each of these nine open sets $U_{i,j}$ is connected by Propositions~\ref{prop_connByGr} and 
\ref{prop_ballConn} (the intersection graph spans a~path), and they are mutually disjoint. Using 
Proposition~\ref{prop_opArcCon}, we obtain nine PL arcs 
$h_{i,j}\cc[a,b]\to U_{i,j}$ such that $h_{i,j}$ joins $\al_{i,j}=h_{i,j}(a)=g_{i,j}(a)$ ($\in\partial B_i$) to 
$\be_{i,j}=h_{i,j}(b)=g_{i,j}(b)$ ($\in\partial B_j$) in $U_{i,j}$. We consider the first intersections of 
the straight segments 
$u_i\,\al_{i,j}$ and $u_j\,\be_{i,j}$
with $h_{i,j}[I]$ (Proposition~\ref{prop_exitEnter})
and obtain nine PL arcs
$p_{i,j}$ such that $p_{i,j}$ joins $u_i$ to $u_j$ and 
$$
p_{i,\,j}^0\cap p_{i',\,j'}^0=\emptyset=p_{i,\,j}^0\cap(U\cup V)
$$
for every pairs $\langle i,j\rangle\ne\langle i',j'\rangle$.
Then $U$, $V$, and $\{p_{i,j}\cc\;1\le i\le3,\,4\le j\le6\}$ is a~PL $K_{3,3}$-configuration.

\medskip\noindent
{\bf Claim~4. The WJT holds. }We assume the contrary that there exists 
a~circuit $f\cc I\to\R^2$ such that the complement $\R^2\setminus f[I]$ is 
connected, and obtain from $f$ a~$K_{3,3}$-configuration. By Claim~3 it is a~contradiction.

So let $f\cc I\to\R^2$ be a~circuit with the property that the complement 
$D=\R^2\setminus f[I]$ is connected.
Let $a$ and $b$ be points in $f[I]$ with the maximum and minimum 
$y$-coordinate, respectively (Propositions~\ref{prop_imComp} and \ref{prop_higLow}).
Clearly, $a_y>b_y$ (Proposition~\ref{prop_CircNoSe}).
By Proposition~\ref{prop_delCykl} the points $a$ and $b$ divide $f[I]$ in 
two arcs $f_0,f_1\cc I\to\R^2$ such that $f_0^0\cap f_1^0=\emptyset$, $f_0[I]\cup f_1[I]=f[I]$, and 
$\mathrm{ep}(f_0)=\mathrm{ep}(f_1)
=\{a,b\}$. Let $f_2\cc I\to\R^2$ be a~PL arc joining $a$ to $b$ such that 
$f_2^0\sus D$. For example, $f_2$ starts with a~vertical segment going 
up from $a$, then goes horizontally  sufficiently far to the right, goes vertically down to 
a~level below $b_y$, continues horizontally left just below 
$b$, and closes by going vertically up to $b$. Such a~PL arc $f_2$ exists by Propositions~\ref{prop_imComp} and 
\ref{prop_CoClBo}. Let $\ell$ be any horizontal line with $y$-coordinate in $(b_y,a_y)$. By Propositions~\ref{prop_interValu} and \ref{prop_onT}, the line $\ell$ contains a~straight segment $T$
such that one endpoint $c$ is in $f_0^0$, the other endpoint $d$ is in 
$f_1^0$ and $T^0\sus D$. By the connectedness of $D$ and by 
Propositions~\ref{prop_compOpen}, \ref{prop_exitEnter} and \ref{prop_opArcCon}, there exists an arc $f_{e,g}$ 
joining a~point $e\in T^0$ to a~point $g\in f_2^0$ in $D$ and  such that $f_{e,g}^0\cap(T\cup f_2[I])=\emptyset$. We
set $U=\{a,b,e\}$, $V=\{c,d,g\}$ and define the connecting arcs. The arc 
$f_{e,g}$ is already defined. The arcs
$f_{e,c}$ and $f_{e,d}$ are the corresponding subsegments of $T$. The arcs $f_{a,c}$ and $f_{a,d}$ are the corresponding 
subarcs of $f_0$ and $f_1$, respectively. The arc 
$f_{a,g}$ is the corresponding subarc of $f_2$. The arcs $f_{b,c}$, $f_{b,d}$, and $f_{b,g}$ are defined similarly to 
those starting at $a$. It is easy to check that the nine arcs defined have mutually disjoint interiors and that  
$U$, $V$, and 
$\{f_{u,v}\cc\;u\in U,v\in V\}$ is a~$K_{3,3}$-configuration.

\section{Filippov's proof of the WJT}\label{sec_Filippov}

{\bf Step~1. The parity map $N$ is constant. }Let $f$ and $g$ be as described in Section~\ref{sec_intro}: $f,g\cc I=[a,b]\to\R^2$ are PL maps such that 
$f[I]\cap g[I]=\emptyset$ and that either $f$ is closed or 
$g[I]$ lies between two vertical lines going through the points $f(a)$ 
and $f(b)$. Let $S(f)=\R^2\setminus f[I]$ if $f$ is 
closed, and otherwise let $S(f)$ be the intersection of $\R^2\setminus f[I]$ 
with the open strip between the two lines.

We define the {\em parity map} 
$$
N=N(c)=N(c,\,f)\cc S(f)\to\{0,\,1\}\ \ (\sus\R)\,. 
$$
Let $c\in S(f)$. As in Claim~1 of Thomassen's proof, we denote by $\ell(c)$ the half-line going up from $c$ 
and consider the geometric intersection $P(c)$ of $\ell(c)$ and 
$f[I]$. For $P(c)=\emptyset$ we set $N(c)=0$. Let $P(c)\ne\emptyset$. Since now $f$ need not 
be injective, we consider instead of $\ell(c)\cap f[I]$ the preimage 
of this set. Let $a=a_0<a_1<\ds<a_k=b$ be the partition 
of the domain $[a,b]$ of $f$. For every $i=1,2,\ds,k$, the segment 
$s_i=f[\,[a_{i-1},a_i]\,]$ of $f[I]$ is either disjoint 
to $\ell(c)$ or 
$s_i\cap\ell(c)=\{p\}$ for a~single point $p$ or $s_i\sus\ell(c)$. It 
follows that
$$
P(c)=P(c,\,f):=f^{-1}[\ell(c)\cap f[I]\,]\ \ (\sus[a,\,b])
$$
is a~finite disjoint union of 
components $z$ such that every $z$ is either a~subinterval $[a_i,a_j]$ ($\sus[a,b]$) with $0\le 
i<j\le k$, where for non-closed $f$ the first and last inequality is
strict, or a~point in $(a,b)$. We call $z$ a~{\em simple component} if the values $f(t)$ for $t$ immediately before and after $z$ lie on different 
sides of $\ell(c)$. Else, if they lie on the same side of $\ell(c)$, we call 
$z$ a~{\em double component}. We define
$$
\text{$N(c)=$ the parity of the number of simple components of $P(c)$}\ \ (\in\{0,\,1\})\,.
$$

We prove that the parity map $N$ is continuous on $S(f)$. Let $c\in S(f)$. We take 
a~$\de>0$ small enough such that the ball $B=B(c,\de)\sus S(f)$ (Proposition~\ref{prop_compOpen}) and that no 
vertical line that intersects $B$ contains a~corner of $f[I]$ 
distinct from those whose preimages appear in $P(c)$. Let $c'\in B$ with $c'\ne c$. 
As we move $c$ to $c'$, every simple component of $P(c)$ is preserved
and every double component either does not change or disappears or 
decays in two 
simple (point) components. It follows that $N(c)=N(c')$. 
So $N$ is continuous at $c$ and on $S(f)$. We obtain the main result of 
Step~1:
$$
\text{the parity map $N$ is constant on the set $g[I]$}\,.
$$
Indeed, since $g[I]\sus S(f)$, the map $N(g)\cc I\to\R$ is continuous and $N(g)[I]\sus\{0,1\}$. If $N(g)[I]=
\{0,1\}$, we contradict Proposition~\ref{prop_conImaCon}.

\medskip\noindent
{\bf Step~2. Two properties of the parity map $N$. }The first property is
the additivity of $N(c,f)$ in the variable $f$. Let $f_1,f_2\cc 
I=[a,b]\to\R^2$ be two non-closed PL maps such that $f_1(b)=f_2(a)$. Let $f_3=f_1+f_2$ and let
$c\in S(f_1)\cap S(f_2)\cap S(f_3)$. Then
$$
N(c,\,f_3)=N(c,\,f_1)+N(c,\,f_2)\ \ 
(\mathrm{mod}\;2)\,.
$$
Here is an outline of the proof of this equality: if $S_i$ is the set of simple components in $P(c,f_i)$, 
$i=1,2,3$, then there is a~bijection
$$
\be\cc S_3\to(S_1\times
\{0\})\cup(S_2\times\{1\})\,.
$$ 
We leave the details of $\be$ to the reader as an exercise.

We proceed to the second property of $N$. Let $f\cc I=[a,b]\to\R^2$ be a~non-closed PL map 
and let $c\in S(f)$ lie below all intersections of the line $x=c_x$ with 
$f[I]$. Then
$$
N(c,\,f)=1\,.
$$
To prove it, we consider 
the partition $a=a_0<a_1<\ds<a_k=b$ of the domain of $f$ and the components 
$z_1<z_2<\ds<z_l$ of 
$$
P(c,f)=f^{-1}[\ell(c)\cap f[I]\,]=
f^{-1}[(x=c_x)\cap f[I]\,]\,.  
$$
As we know, every component $z_i$ is an interval $[a_{r_i},a_{s_i}]$ ($\sus[a,b]$) with 
$0<r_i<s_i<k$ or a~point $\{t_i\}$ with $t_i\in (a,b)$. It is clear that 
$P(c,f)\ne\emptyset$ and $l\ge1$ (Proposition~\ref{prop_interValu}). We
call the $l+1$ open subintervals 
$$
g_0=(a_0,\min(z_1)),\, 
g_1=(\max(z_1),\min(z_2)),\,\ds,\,
g_l=(\max(z_l),\,,a_k)
$$
of $(a,b)$ {\em gaps}. It follows from  the assumption on $c$ that for every 
gap $g_i$ and every $t\in g_i$, all values $f(t)$ lie on only one side  of 
$\ell(c)$, the left side $L$ or the right side $R$.
We accordingly assign to $P(c,f)$ a~word $w_0w_1\ds w_l\in
\{L,R\}^{l+1}$. For $i=1,2,\ds,l$,  the transition $w_{i-1}w_i$ with 
$w_{i-1}=w_i$  
corresponds to a~double component of $P(c,f)$, and that with 
$w_{i-1}\ne w_i$ to a~simple component. 
Since $w_0\ne w_l$, the number of the latter transitions is odd.

\medskip\noindent
{\bf Step~3. Some two points in the complement of any circuit are separated. }Let $f\cc I\to\R^2$ be 
a~circuit and $D=\R^2\setminus f[I]$. We may assume that $f[I]$ lies to the 
right of the $y$-axis (Propositions~\ref{prop_imComp} and \ref{prop_CoClBo}). Let $l\in f[I]$ and $p\in f[I]$ be, respectively, 
a~leftmost and a~rightmost point in $f[I]$ (Propositions~\ref{prop_imComp} and \ref{prop_higLow}). These points divide $f$ in two arcs $f_1,f_2\cc I\to\R^2$ such that $f_1^0\cap f_2^0=\emptyset$, $f_1[I]\cup 
f_2[I]=f[I]$ and $\mathrm{ep}(f_1)=\mathrm{ep}(f_2)=\{l,p\}$ (Proposition~\ref{prop_delCykl}). Clearly, $l_x<p_x$ (Proposition~\ref{prop_CircNoSe}). 
Let $\ell$ be any vertical line with $x$-coordinate 
in $(l_x,p_x)$. Let $a\in f[I]\cap\ell$ be the highest point of this intersection
(Proposition~\ref{prop_higLow}). We 
may assume that $a\in f_1^0$. Similarly, let $b\in f_1^0\cap\ell$ 
be the lowest point of this intersection (Proposition~\ref{prop_higLow}). We take a~point $c\in\ell$ below $b$ but such that
$$
0<d_2(c,\,b)<d_2(b, f_2[I])
$$
---\,then $c\in D$. The point $c$ exists by Propositions~\ref{prop_imComp} and \ref{prop_distComSet}, 
by which $d_2(b, f_2[I])>0$. 
We show that for every PL map $g\cc I\to\R^2$ that joins $c\in D$ to the origin 
$\overline{0}=\langle0,0\rangle\in D$ ($f[I]$ lies to the right of the $y$-axis), $g[I]\cap f[I]\ne\emptyset$.
In view of Propositions~\ref{prop_compOpen} and \ref{prop_opArcCon} it means that 
the complement $D=\R^2\setminus f[I]$ is disconnected.

Suppose for the contrary that $g\cc I\to\R^2$ is a~PL map joining $c$ to 
$\overline{0}$ in $D$, so that $g[I]\cap f[I]=\emptyset$. Let 
$\kappa$ ($\sus\R^2$) be the half-line going down from $c$ and let
$$
\lambda=c\,b\cup f_3[I]\cup\ell(a)\ \ (\sus\R^2)\,,
$$
where $cb$ is the straight segment joining these two points, $f_3$ is 
a~subarc of $f_1$ joining $b$ to $a$ (if $b=a$, we omit $f_3[I]$) and, as 
before, $\ell(a)$ is the half-line going up from $a$. Let $h>0$ be the 
minimum of the three positive distances
$$
d_2(g[I],\,f[I])>0,\ d_2(\kappa,\,f_1[I])>0\,\text{ and }\,d_2(\lambda,\,f_2[I])>0\,.
$$
The first distance is positive by the assumption that $g[I]\cap f[I]=\emptyset$ and by 
Propositions~\ref{prop_imComp} and \ref{prop_distComSet}. The latter two distances are positive by 
Propositions~\ref{prop_uniClo}, \ref{prop_imagClos}, 
\ref{prop_imComp}, \ref{prop_CoClBo}, and \ref{prop_distComSet1}.

We approximate the given circuit $f\cc I=[\al,\be]\to\R^2$ sufficiently 
closely by a~closed PL map. By Proposition~\ref{prop_uniCon}, there 
exists a~partition 
$\al=a_0<a_1<\ds<a_k=\be$ of $[\al,\be]$ such that 
$\{a,b,l,p\}\sus
\{f(a_i)\cc\;i=0,1,\ds,k\}$ and
$$
d_2(f(a_{i-1}),\,f(a_i))<h\,\text{ for every }\,i=1,\,2,\,\ds,\,k\,.
$$
We join for $i=1,2,\ds,k$ the consecutive 
points $f(a_{i-1})$ and $f(a_i)$ ($f(a_0)=f(a_k)$)  by straight segments and obtain 
a~closed PL map $f_4\cc I=[\al,\be]\to\R^2$ such that 
all corners of $f_4[I]$ lie on $f[I]$ and include the four points $a$, $b$, 
$l$, and $p$. Also, for every $z\in f_4[I]$ we have $d_2(z,f[I])<
\frac{1}{2}h$. Let $f_5$, $f_6$, and $f_7$ be the non-closed PL sub-maps of 
$f_4$ spanned by the corners of $f_4[I]$ lying in $f_1[I]$, 
$f_2[I]$, and $f_3[I]$, respectively. 
The above definition of $h$ and the $\frac{1}{2}h$-approximation of 
$f_4[I]$ by $f[I]$ imply that
$$
f_4[I]\cap g[I]=\emptyset,\ 
f_5[I]\cap\kappa=\emptyset,\,\text{ and }\,f_6[I]\cap(cb\cup f_7[I]\cup\ell(a))=\emptyset\,.
$$

The parity function $N$ produces the contradiction
$$
0=N(\overline{0},\,f_4)=
N(c,\,f_4)=N(c,\,f_5)+N(c,\,f_6)=1+0
=1\,.
$$
The first equality follows from the fact that since $f[I]$ lies to the 
right of the $y$-axis, so does $f_4[I]$. The second equality follows 
from Step~1 because $f_4[I]\cap g[I]=\emptyset$. The third equality 
follows from the first property of $N$ in Step~2 because (we can define 
$f_5$ and $f_6$ in such a~way that) $f_4=f_5+f_6$. We have $N(c,f_5)=1$ 
by the second property of $N$ in Step~2 (since $f_5[I]\cap\kappa
=\emptyset$, the point $c$ lies below all intersections of the line $x=c_x$ with $f_5[I]$). Finally, $N(c,f_6)=N(a,f_6)$ 
by Step~1 because $(cb\cup f_7[I])\cap f_6[I]=\emptyset$, and $N(a,f_6)=0$ by the definition of $N$
because $\ell(a)\cap f_6[I]=\emptyset$.

\section{Concluding remarks}\label{sec_concl} 

Jordan's proof of his theorem in \cite{jord} was often questioned in the past, and it was claimed that the first correct proof of the JT was given by 
Veblen \cite{vebl}. Hales \cite{hale_jord} reversed these 
assessments and showed that Jordan's proof is basically correct when some gaps are 
filled, but that Veblen's proof has serious problems. Filippov's article 
\cite{fili} is preceded by the article \cite{volp} by A.~I. Vol'pert, which 
contains a similar proof of the JT. The footnote on the first page of 
\cite{fili} reads ``After this work was completed, the author learned 
that a~proof of Jordan's theorem, very close to the one in this article, 
was somewhat earlier found by A.~I. Vol'pert (Lviv)''. Authors of these two proofs, {\em 
Aleksei F. Filippov (1923--2006)} and {\em Aizik I. Volpert (Vol'pert) 
(1923--2006)}, were excellent mathematicians\,---\,see \cite{filiWiki,aizi}.

In recent years, the ways in which we (mathematicians and 
philosophers) think about the links between informal and formal 
proofs, and about the 
role of formalized mathematics, have changed considerably and dramatically 
compared to the times before, say, 2000: see, for example, 
\cite{brya_al,dvor,FLT,hale}. What should be the results of the work of 
a~research mathematician? A~possible answer is: rigorous proofs of 
mathematical theorems. An informal proof is rigorous if it can be routinely 
translated into a~formal proof (\cite{hama3}), formal in the sense of 
the previously mentioned developments; see also \cite{hama1,hama2}. In this 
article, we wanted to lift the beautiful proofs of Thomassen and 
Filippov to the modern level of rigor, and we hopefully succeeded. The 
preprint \cite{brya_al} reports on the process of autoformalization of 
Munkres' textbook \cite{munk} on topology. It is an impressive 
achievement; it seems that very soon AI will do everything for us. But does this 
autoformalization help a~student interested in topology learn and 
understand (\cite{hama_morr0,hama_morr}) the proofs of theorems in \cite{munk}? 
Not much. In contrast, we hope that our article will be 
helpful to anyone wishing to understand the two proofs of the WJT.   

I lectured on Thomassen's proof of the WJT in the course {\em Mathematical 
Analysis~3} in the years 2023--2026. For the next year, I plan to switch to 
the proofs of the JT in \cite{fili,volp}.

\end{document}